\newtheorem{rmk}{Remark}[section]
\newcommand{\nwc}{\newcommand}
\nwc{\qref}[1]{(\ref{#1})} 
\newcommand{\tr}{\mbox{tr}}
\renewcommand{\div}{\nabla\!\cdot\!}
\nwc{\ip}[1]{\langle #1 \rangle}
\newcommand{\RR}{\mathbb{R}}
\newcommand{\PP}{\mathbb{P}}
\newcommand{\Omg}{\Omega}
\newcommand{\Gam}{\Gamma}
\newcommand{\eps}{\mbox{\boldmath $\epsilon$}}
\renewcommand{\gg}{\mbox{\boldmath $g$}}
\renewcommand{\ss}{\mbox{\boldmath $s$}}
\newcommand{\ww}{\mbox{\boldmath $w$}}
\newcommand{\ee}{\mbox{\boldmath $e$}}
\renewcommand{\aa}{\mbox{\boldmath $a$}}
\newcommand{\nn}{\mbox{\boldmath $n$}}
\newcommand{\ff}{\mbox{\boldmath $f$}}
\newcommand{\FF}{\mbox{\boldmath $F$}}
\newcommand{\GG}{\mbox{\boldmath $G$}}
\newcommand{\HH}{\mbox{\boldmath $H$}}
\newcommand{\EE}{\mbox{\boldmath $E$}}
\newcommand{\tuu}{\mbox{\boldmath $\tilde{u}$}}
\newcommand{\tvv}{\mbox{\boldmath $\tilde{v}$}}
\newcommand{\uu}{\mbox{\boldmath $u$}}
\newcommand{\sig}{\mbox{\boldmath $\sigma$}}
\newcommand{\eeta}{\mbox{\boldmath $\eta$}}
\newcommand{\II}{\mbox{\boldmath $I$}}
\newcommand{\vv}{\mbox{\boldmath $v$}}
\newcommand{\xx}{\mbox{\boldmath $x$}}
\newcommand{\zz}{\mbox{\boldmath $z$}}
\newcommand{\yy}{\mbox{\boldmath $y$}}
\renewcommand{\grad}{\nabla}
\newcommand{\pa}{\partial}
\newcommand{\lam}{\lambda}
\newcommand{\TT}{\mathcal{T}}
\newcommand{\Dt}{\Delta t}
\renewcommand{\div}{\nabla \cdot}
\renewcommand{\(}{\left(}
\renewcommand{\)}{\right)}
\newcommand{\<}{\left<}
\renewcommand{\>}{\right>}
\nwc{\ta}{\tilde{a}}
\renewcommand{\t}{{(t)}}
\newcommand{\np}{{n\!-\!\frac12}}
\newcommand{\npp}{{n\!-\!1}}
\newcommand{\vphi}{\mbox{\boldmath $\varphi$}}
\newcommand{\vpsi}{\mbox{\boldmath $\psi$}}
\newcommand{\tphi}{\mbox{\boldmath $\phi$}}
\newcommand{\vecw}{\mbox{\reflectbox{$\vec{\reflectbox{\ww}}$}}}
\newcommand{\vecu}{\mbox{\reflectbox{$\vec{\reflectbox{\uu}}$}}}
\newcommand{\vecf}{\mbox{\reflectbox{$\vec{\reflectbox{\ff}}$}}}
\newcommand{\lvecu}{\vec{\uu}}
\newcommand{\vecphi}{\mbox{\reflectbox{$\vec{\reflectbox{$\phi$}}$}}}
\newcommand{\vectphi}{\mbox{\reflectbox{$\vec{\reflectbox{\tphi}}$}}}
\newcommand{\vpi}{\mbox{\boldmath $\pi$}}
\newcommand{\IIs}{{I\!I_s}}
\title{Combined field formulation and a simple stable explicit interface advancing scheme for fluid structure interaction}
\author{Jie Liu\thanks{Department of Mathematics, National University
of Singapore, Singapore 119076 ({\tt {matlj@nus.edu.sg}})} }
\begin{document}
\maketitle

\begin{abstract}
We develop a combined field formulation for the fluid structure (FS)
interaction problem. The unknowns are $(\uu;p;\vv)$, being the fluid
velocity, fluid pressure and solid velocity. This combined field
formulation uses Arbitrary Lagrangian Eulerian (ALE) description for
the fluid and Lagrangian description for the solid. It automatically
enforces the simultaneous continuities of both velocity and traction
along the FS interface. We present a first order in time fully
discrete scheme when the flow is incompressible Navier-Stokes and
when the solid is elastic. The interface position is determined by
first order extrapolation so that the generation of the fluid mesh
and the computation of $(\uu;p;\vv)$ are decoupled. This explicit
interface advancing enables us to save half of the unknowns
comparing with traditional monolithic schemes. When the solid has
convex strain energy (e.g. linear elastic), we prove that the total
energy of the fluid and the solid at time $t^{n}$
is bounded by the total energy at time $t^{n-1}$. 
Like in the continuous case, the fluid mesh velocity
which is used in ALE description does not enter into the stability bound.
{\em Surprisingly, the nonlinear convection term in the Navier-Stokes equations
plays a crucial role to stabilize the scheme} and the stability result does not apply to Stokes flow.
As the nonlinear convection term is treated semi-implicitly,
in each time step, we only need to solve a linear system (and only once) which involves merely $(\uu;p;\vv)$ if
the solid is linear elastic.
Two numerical tests including the benchmark test of Navier-Stokes flow past a
Saint Venant-Kirchhoff elastic bar are performed. In addition to
the stability, we also confirm the first order
temporal accuracy of our explicit interface advancing scheme.
\end{abstract}

\begin{keywords}
Fluid Structure Interaction; Arbitrary Lagrangian Eulerian;
Navier-Stokes Equations; Saint Venant-Kirchhoff;
\end{keywords}

\section{Introduction}
The interaction of a rigid or deformable solid with its surrounding fluid or
the fluid it enclosed gives rise to a
very rich variety of phenomena. For example, a rotating fan, a vibrating aircraft wing,
a swimming fish. The daily activities of many parts of our human body are also
intimately related to this fluid solid or more commonly called fluid structure (FS) interaction.
For example, the heart beating, respiration,
speaking, hearing, and even snoring.
To understand those phenomena, we need to model both the fluid and the
solid. In this paper, we assume the fluid is incompressible and the solid
is deformable. The governing equations for FS interaction are then as follows:
\begin{align}
     \rho^f (\pa_t \uu+\uu\cdot\grad\uu) = \div \sig^f + \rho^f\gg^f,
     \qquad \div\uu=0 \qquad\mbox{ in }\Omg^f_{(t)},    \label{dn1.1}\\
     \rho^s \pa_{tt} \vphi = \div \sig^s + \rho^s\gg^s \qquad\mbox{ in }\Omg^s \label{dn1.2}.
\end{align}
In the above equations, $\Omg^f_\t \subset \RR^d$ is the fluid domain at time $t$ and
$\Omg^s \subset \RR^d$ is the initial configuration of the solid.
$\uu=\uu(\xx,t)$ is the velocity at a spatial point
$\xx$ in $\Omg^f_\t$.
$\vphi=\vphi(\zz,t)$ is
the position at time $t$ of a material point $\zz$ in $\Omg^s$.
Constants $\rho^f$ and $\rho^s$ are the densities of the fluid and the solid.
As one can tell, the fluid is described by the Eulerian (spatial)
description while the solid is described by the Lagrangian
(material) description with reference configuration $\Omg^s$.
$\sig^f$ is the stress of the fluid in the Eulerian
description and $\sig^s$ is the stress of the solid in the Lagrangian
description. If the fluid is assumed to be viscous
Newtonian,
\begin{equation}
 \sig^f=\sig^f(\uu,p)=2\rho^f\nu^f\,\eps(\uu)-p\,I=\rho^f\nu^f(\grad\uu+\grad\uu^\top)-p\,I.
\end{equation}
If the solid is elastic, then there is a strain energy $I_s$ with density $W$:
\begin{equation} \label{strain}
 I_s(\vphi(\cdot,t))=\int_{\Omg^s} W(\grad \vphi(\zz,t)) d\zz,
\end{equation}
so that the stress $\sig^s$ is determined by $\sig^s(\FF)=\pa W(\FF)/\pa \FF$.
Here $\FF(\zz,t)=\grad\vphi(\zz,t)$ is the deformation gradient.
As a result, $\sig^s$ satisfies
\begin{equation} \label{strain.1}
 \left.\frac{d}{d\epsilon}\right|_{\epsilon=0}I_s(\vphi+\epsilon \tphi)=\<\sig^s(\vphi),\grad\tphi\>_{\Omg^s}
 \qquad \forall \; \vphi \; \mbox{and} \; \tphi.
\end{equation}
Like in $\sig^f(\uu,p)$, we write $\sig^s(\vphi)$ instead of $\sig^s(\grad\vphi)$ for simplicity.
Here $\<A,B\>_{\Omg^s}=\int_{\Omg_s} (A,B)dx$ and $(A,B)=A:B=\mbox{tr}(A^\top B)$. 
Let $\EE=\frac12 \(\FF^\top \FF - \II\)$ be the strain tensor. If the strain of the solid is small,
we can model the solid as Saint Venant-Kirchhoff material with
$W=W_{SV\!K}(\FF)=\mu^s \tr(\EE^2) + \frac{\lam^s}{2}(\tr\EE)^2 $. Simple calculation shows
\begin{equation} \label{svk.constitutive}
 \sig^s(\vphi)= 2\mu^s \FF \EE + \lam^s (\mbox{tr} \EE) \FF .
\end{equation}
If the deformation of the solid is small, one popular choice of
$\sig^s$ is
\begin{equation} \label{lin.material}
 \sig^s(\vphi)=\mu^s(\grad\eeta+\grad\eeta^\top)+\lam^s(\div\eeta)\II,
\end{equation}
where $\eeta(\zz,t)=\vphi(\zz,t)-\zz$ is the displacement vector.
We can define an associated $W=W_L(\FF)=\mu^s\tr\(\EE_L^2\) + \frac{\lam^s}{2} \(\tr \EE_L\)^2$
with $\EE_L=\frac{\FF+\FF^\top}{2}-\II$ and check
$\(\frac{\pa W_L\(\FF\)}{\pa \FF},\GG\) = \mu^s\(\(\FF-\II\)+\(\FF-\II\)^\top, \GG\)+\lam^s\(\tr(\FF-\II)\II, \GG\)$.
We can also verify the convexity of $W_L$:
\[
\(\frac{\pa^2 W_L\(\FF\)}{\pa \FF^2}:\GG,\GG\)
\!\!=\!\!\(\left.\frac{d}{d\epsilon}\right|_{\epsilon=0}\!\!\!\frac{\pa W_L\(\FF\!+\!\epsilon \GG\)}{\pa \FF},\GG\)
\!\!=\!\!\frac{\mu^s}{2}\(\GG\!+\!\GG^\top, \GG\!+\!\GG^\top\!\)+\lam^s(\tr\GG)^2.
\]
But we will see later that $W_{SV\!K}$ is not convex (see \qref{svk.As}).
Indeed, it is not even polyconvex \cite[sec 3.9]{Ci2}


The fluid and the solid
are coupled by the continuity of velocity and traction across the FS
interface. Mathematically, it can be written as (see Antman \cite[p.485,
$(15.2b)$, p.489, (15.31)]{An})
\begin{align}
     \uu(\vphi(\zz,t),t)=\pa_t \vphi(\zz,t) \qquad \forall \zz\in \Gam, \label{dn1.3} \\
     \int_{\vphi(\ee,t)}\sig^f(\xx,t) \nn^f da(\xx) + \int_{\ee} \sig^s(\zz,t) \nn^s da(\zz)=0
      \qquad \forall  \ee\subset\Gam. \label{dn1.4}
\end{align}
Here $\Gam=\pa\Omg^f_{(0)} \cap \pa \Omg^s$ is the FS
interface at $t=0$ and $\ee$ is any part of it. $\vphi(\ee,t)$ is
the image of $\ee$ at time $t$ under the mapping $\vphi$ and it is
part of the fluid boundary. $\nn^f$ and $\nn^s$ are the
outward normals.
Besides the interface conditions \qref{dn1.3} and \qref{dn1.4}, we
also need boundary conditions on the rest of the boundaries. These fixed boundaries are called $\Sigma_1$ to $\Sigma_4$:
$\pa\Omg^f_{(t)}=\vphi(\Gam,t)\cup\Sigma_1\cup\Sigma_2$, $\pa
\Omg^s=\Gam\cup \Sigma_3\cup \Sigma_4$. We require
\begin{equation} \label{dn1.5}
  \uu|_{\Sigma_1}=\uu_b, \qquad \sig^f\nn^f|_{\Sigma_2}=\sig^f_b, \qquad \vphi|_{\Sigma_3}=\vphi_b, \qquad \sig^s \nn^s|_{\Sigma_4} = \sig^s_b.
\end{equation}
Here $\uu_b$, $\sig^f_b$, $\vphi_b$ and $\sig^s_b$ are prescribed fluid velocity,
fluid traction, solid position and solid traction on the $\Sigma_i$'s.



The well-posedness of FS interaction problems has
been studied for various models (see \cite{DE,G,CoS} and the
references in \cite{CoS}). When the solid is deformable, the
analysis uses Lagrangian description for both the fluid
and the solid \cite{G,CoS}. But for computational efficiency, we use
Arbitrary Lagrangian Eulerian (ALE) description for fluid in this paper.
We learn from \cite{FN1} the idea of using conservative ALE description
with proper intermediate mesh or meshes so that the stability
bound obtained does not explicitly depend
on the mesh movement. But \cite{FN1} only considers the convection
diffusion equation on a domain whose motion is known.


We will show that \qref{dn1.4} is equivalent to \qref{antman.4} while the latter is more popular in the literature.
Schemes based on \{\qref{dn1.1},\qref{dn1.2},\qref{dn1.3}, \qref{antman.4},
\qref{dn1.5}\} can be classified into two classes: partitioned schemes and monolithic schemes. In a partitioned scheme
(\cite{CGN,FZG}), different solvers are used for fluid and elastic equations. For stability reason, one may hope
that the two interface conditions \qref{dn1.3} and \qref{antman.4} are
satisfied simultaneously. But this cannot be achieved with one single
iteration between the fluid and the solid solvers.
Various improvements have been proposed:
\cite{FGG,BQQ} reduce the fluid-structure coupling to pressure-structure coupling after a temporal discretization;
\cite{GGCC} proposes a beautiful stable splitting scheme for fluid-membrane interaction problem;
\cite{GGNV} discusses how to achieve fast convergence by properly choosing the boundary conditions for different solvers.
So far, all the numerical analysis requires that the fluid domain does not move and
the flow is Stokes type \cite{FGG,GGCC,GGNV}.
These assumptions conceal the important fact that the convection term
in the Navier-Stokes equations indeed stabilizes the scheme when the domain moves.
In a monolithic scheme (\cite{TH1,BDR}), the
governing equations for the fluid and the solid as well as the governing
equation for the displacement of the mesh are coupled and solved
all together. Hence a large nonlinear system is required to solve
in each time step \cite[remark below (3.11)]{BDR} \cite[section
3.5]{TH1}. Since the interface conditions \qref{dn1.3} and \qref{antman.4}
are built into the system, they are
automatically satisfied once the system is solved. Then it is easy
to {\em believe} that monolithic schemes will preserve the stability of the
associated continuous models. However, as the discrete schemes so far
proposed are very complicated, we are not aware of any {\em proof} of
existence and stability of numerical solutions in the literature.

The celebrated immersed boundary method of Peskin \cite{Pe} uses
delta function to represent the force at the FS interface.
When the solid is codimension-1, i.e., a surface in $\RR^3$ or a curve in $\RR^2$,
there comes the immersed interface method of LeVeque and Li \cite{LL}
which is spatially more accurate. In this method, instead of using delta
function, the forces are translated into the jump conditions
across the interface. These jump conditions are then taken care by changing the
discretization of the differential operators at
stencils acrossing the interface \cite{LL,BS}.
To our point of view, there are still some aspects left to be improved for the methods initiated by Peskin, LeVeque and Li:
Immersed boundary method {\em in general} is only first order in space \cite[p.500,p.509]{Pe}, \cite[p.4]{BS};
Immersed interface method cannot handle solid with a finite volume.

In this paper, we will present a new monolithic scheme. It is based on a
new formulation of FS interaction which in some aspect is similar to the well-known one field formulation of multi-fluid flows
(see \cite[(1)]{UT} for example, but you will not see any delta function in our method).
In our scheme, the unknown is $(\uu;p;\vv)$, being the fluid velocity, fluid pressure and solid velocity.
There are many nice features of our scheme:

\begin{itemize}

 \item[1)] (Explicit mesh moving)
Our scheme is explicit interface advancing which means that the FS interface at time $t^n$ is
constructed explicitly using only information of the solid at time $t^{n-1}$.
So, determining the FS interface and computing $(\uu;p;\vv)$ are decoupled.

 \item[2)] (Smaller system) Let $\uu^f$ be the fluid velocity.
Let $\uu^s$ and $\vphi^s$ be the solid velocity and position.
Let $\uu^{fm}$ and $\vphi^{fm}$ be the fluid mesh velocity and position.
Then, ignoring the pressure for simplicity, \cite[(37)]{TH1}'s unknown is $(\uu^f;\uu^s;\vphi^s;\vphi^{fm})$,
and \cite[(3.10)]{BDR}'s unknown is $(\uu^f;\uu^s;\vphi^s;\uu^{fm};\vphi^{fm})$.
Our unknown is $(\uu^f;\uu^s)$.
In our scheme, the mesh related information and the solid position are all
treated explicitly.

 \item[3)] (Linear system which is solved only once) Our scheme only asks
to solve a linear system once per time step when
it is Navier-Stokes flow and linear elastic structure coupling. But \cite{TH1,BDR}
ask to solve a nonlinear system with unknowns mentioned before per time step.

 \item[4)] (Easy implementation) As our scheme is Jacobian free and does not require characteristics,
it is very easy to implement.

 \item[5)] (Stability) The most important feature of our scheme is its stability:
Even though the fluid mesh is constructed explicitly,
the total energy of the fluid and the solid at time $t^{n}$
is bounded by the total energy at time $t^{n-1}$.
The stability bound hence obtained does not explicitly depend
on the fluid mesh velocity which however is explicitly used in the computation through the ALE
formulation (see Theorem~\ref{T.stab.fluid} which is our main theorem).
This feature is shared by the original continuous model (see Theorem~\ref{P.stab}).
Surprisingly, the nonlinear convection term in the Navier-Stokes equations
plays a crucial role in proving Theorem~\ref{T.stab.fluid} and the stability result does not apply to Stokes flow.

\end{itemize}

However, as the energy bound alone at $t^{n-1}$ is not enough to prevent the
FS interface from colliding with itself or other fixed boundaries at $t^{n}$, we have
to {\em assume} (but see Remark~\ref{R.Remark1}) that the $\Dt$ we take satisfies the following collision free condition
so that we are able to construct
the fluid mesh at time $t^n$:
\begin{equation}\label{stab.assum.1}
\begin{array}{ll}
 & \text{the time step $\Dt$ that we take will not make the FS} \\
 & \text{interface collides with itself or other fixed boundaries}.
\end{array}
\end{equation}

As the interface is determined by extrapolation,
one may wonder what is the point to use a monolithic scheme --- can the
unknowns be easily solved for separately?
If one try that, the scheme becomes a loosely coupled partitioned scheme. Consequently, there will be time lag
in the enforcement of the two continuity conditions. How this time lag affects the stability
will not be addressed in this paper and we refer to \cite{CGN,FZG} and the references therein.
By paying the price of solving a larger system,
monolithic scheme enables the satisfaction both \qref{dn1.3} and
\qref{antman.4} at the same time which is the key to get stability in all density ratio regime.
How to solve the system for $(\uu;p;\vv)$ efficiently will not be addressed in this paper,
even though we do have a {\em linear} system when it is Navier-Stokes and linear elasticity coupling.

As extrapolation is used to determine the interface position,
one may wonder whether it will damage the accuracy.
We note that the position of any point $\zz\in\Gam$ satisfies
$\vphi(\zz,t^n)=\vphi(\zz,t^{n-1})+\int_{t^{n-1}}^{t^n} \vv(\zz,t)dt$ where $\vv$ is the solid velocity.
We know using the right end point rule (hence implicit) to approximate the
integral is not necessarily more accurate than using the left end point rule (hence explicit).
Obviously, left end point rule is cheaper.
Less obviously, as we have mentioned and will prove later, left end
point rule is also stable if it is handled properly.
We will present numerical test that verifies the first order temporal accuracy
in Section~\ref{S.num} (see Table~\ref{Table.accu} and Fig.~\ref{F.interface}). Higher order schemes will be
discussed in a forthcoming paper.


The rest of the paper is organized as follows: In
Section~\ref{S.w.fe} we introduce the combined field formulation and
its weak form for FS interaction, and also discuss the conservative
ALE description of it. We present our scheme in Section 3, starting
from the discretization of the solid part. Then we discuss the fluid
mesh construction, the ALE mapping $\Phi^n$ and its applications. In
particular, we can define the backward in time
(\reflectbox{$\vec{\quad}$}) and the forward in time ($\vec{\quad}$)
extension (see \qref{phi-st} and \qref{vecu}). Before we introduce
our scheme \qref{fluid.1}, we mention two results
(Lemma~\ref{L.gcl.1} and Corollary~\ref{L.int.unun}) which are
related to Geometric Conservation Law and present Lemma~\ref{L.wu}
which contributes to some crucial cancelation that will be used in
the stability proof. After this long preparation, we prove the
stability of our scheme in Section~\ref{S.conservative.stab}. This
is our main theoretical result (Theorem~\ref{T.stab.fluid}). Two
numerical tests will be discussed in Section~\ref{S.num}. They
verfies the stability as well as the first order temporal accuracy
of our scheme.

We emphasize that \qref{dn1.4} does not contain any Jacobian and it fits perfectly into the
framework of (isoparametric) finite element methods.
For \qref{dn1.4}, the author thanks Dr. Stuart Antman for the teaching
of elasticity around year 2002 which motivates most of the work in this paper.


\section{Combined field formulation using ALE description} \label{S.w.fe}
We show that the FS system \{\qref{dn1.1},\qref{dn1.2},\qref{dn1.3},\qref{dn1.4},\qref{dn1.5}\}
has a very clean and simple weak formulation.
Then we put it into ALE format using the conservative formulation.
To save space, we will not discuss the non-conservative formulation but refer the readers to
\cite{Liu2}.

\subsection{Traction boundary condition}
We first show that we can insert test functions into the traction
boundary condition \qref{dn1.4} (\cite[page 489]{An}).
\begin{lemma} \label{L.antman}
Let $\Gam=\pa\Omg^f_{(0)} \cap \pa \Omg^s$ be the FS interface at $t=0$. If \qref{dn1.4} is true, then for any
$\ee\subset \Gam$ and any $\tphi^s$ that is defined on $\Gam$,
\begin{equation} \label{bdc}
  \int_{\vphi(\ee,t)} \tphi^f(\xx)\cdot \(\sig^f(\xx,t) \nn^f\)
  da(\xx) + \int_{\ee} \tphi^s(\zz) \cdot \(\sig^s(\zz,t) \nn^s\) da(\zz)=0,
\end{equation}
where $\tphi^f(\xx)$ is defined on $\vphi(\Gam,t)$ and satisfies
$\tphi^f(\vphi(\zz,t))=\tphi^s(\zz)$.
\end{lemma}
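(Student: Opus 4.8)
The plan is to extract from the resultant-force balance \qref{dn1.4} a \emph{pointwise} traction-continuity relation on the reference interface $\Gam$, and then simply dot that relation with $\tphi^s$ and integrate. The only genuine work is that the two terms of \qref{dn1.4} live on different surfaces — the fluid term on the deformed surface $\vphi(\ee,t)$, the solid term on $\ee\subset\Gam$ — so before I can localize I must transport the fluid integral back to $\ee$. Once both terms are integrals over the same set, the argument is pure localization followed by the reverse transport.

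First I would record the surface change-of-variables for $\vphi(\cdot,t):\Gam\to\vphi(\Gam,t)$: there is a positive scalar surface Jacobian $j(\zz,t)$ with
\[
 \int_{\vphi(\ee,t)} g(\xx)\,da(\xx)=\int_\ee g(\vphi(\zz,t))\,j(\zz,t)\,da(\zz)
\]
for every integrable $g$ and every $\ee\subset\Gam$ (explicitly $j=J\,|\FF^{-\top}\nn^s|$ with $J=\det\FF$, by Nanson's formula, though only the existence of a positive $j$ is needed here; note that the physical normal $\nn^f$ is carried along inside the integrand, so no orientation sign enters). Applying this with $g=\sig^f(\cdot,t)\,\nn^f$ turns the fluid term of \qref{dn1.4} into $\int_\ee \ss(\zz,t)\,da(\zz)$, where $\ss(\zz,t):=\sig^f(\vphi(\zz,t),t)\,\nn^f\,j(\zz,t)$ is a vector field on $\Gam$. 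Thus \qref{dn1.4} becomes $\int_\ee \(\ss(\zz,t)+\sig^s(\zz,t)\,\nn^s\)\,da(\zz)=0$ for every $\ee\subset\Gam$. Since a field whose integral over every measurable subset vanishes must be null almost everywhere (equivalently, shrink $\ee$ to a point and divide by $da$, invoking the Lebesgue differentiation theorem), I obtain the pointwise balance $\ss(\zz,t)+\sig^s(\zz,t)\,\nn^s=0$ a.e.\ on $\Gam$, which is the traction continuity hidden in \qref{dn1.4}.

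Finally, given $\tphi^s$ on $\Gam$ and its push-forward $\tphi^f$ defined by $\tphi^f(\vphi(\zz,t))=\tphi^s(\zz)$, I would dot the pointwise identity with $\tphi^s(\zz)$ and integrate over $\ee$, giving
\[
 \int_\ee \tphi^s(\zz)\cdot\ss(\zz,t)\,da(\zz)+\int_\ee \tphi^s(\zz)\cdot\(\sig^s(\zz,t)\,\nn^s\)\,da(\zz)=0.
\]
In the first term I substitute $\tphi^s(\zz)=\tphi^f(\vphi(\zz,t))$ and the definition of $\ss$ and then undo the change of variables, converting $\int_\ee \tphi^s\cdot\ss\,da(\zz)$ back into $\int_{\vphi(\ee,t)}\tphi^f(\xx)\cdot\(\sig^f(\xx,t)\,\nn^f\)\,da(\xx)$, which yields exactly \qref{bdc}. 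The main obstacle is the transport step: one must be careful with the surface Jacobian and the orientation of the normals because the fluid and solid tractions are integrated over different configurations; everything after that is the standard ``arbitrary $\ee\Rightarrow$ pointwise'' localization together with the same change of variables run in reverse.
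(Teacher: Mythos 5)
Your proof is correct and follows essentially the same route as the paper's: both transport the fluid and solid tractions to a common domain, use the arbitrariness of $\ee$ to localize \qref{dn1.4} into a pointwise traction balance, dot that balance with the test function, and transport back. The only cosmetic difference is that the paper pulls both integrals back to a parameter domain $\vpi$ via an explicit parametrization (with cross-product area elements), whereas you pull the fluid integral directly back to $\Gam$ via the surface Jacobian of Nanson's formula; your intermediate pointwise identity is precisely the paper's \qref{antman.4}.
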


\begin{proof}
Consider $d=3$. Suppose $\ee\subset \Gam$ is the image of the
mapping $\zz=\zz(s_1,s_2)$ with $\ss=(s_1,s_2)\in \vpi$.
Then, when $\xx=\vphi(\zz,t)$, the mapping $\ss\mapsto\zz\mapsto\xx$ maps $\vpi$ to $\ee$ and then to $\vphi(\ee,t)$. So,
we can change all the integrations to $\vpi$:
\begin{equation}
\int_{\vphi(\ee,t)} \!\!\!\!\tphi^f(\xx)\!\cdot \!\(\!\sig^f\!(\xx) \nn^f\)\!
da(\xx) \!=\!  \int_{\vpi} \!\!\tphi^f\(\xx(\ss)\) \!\cdot \!\(\!
\sig^f\!\(\xx(\ss)\) \(\frac{\pa \xx}{\pa s_1} \times \frac{\pa
\xx}{\pa s_2}\!\)\!\) d s_1 d s_2, \label{bdit.1}
\end{equation}
\begin{equation}
\int_{\ee} \tphi^s(\zz)\cdot\(\sig^s(\zz) \nn^s\) da(\zz) =
-\int_{\vpi} \tphi^s(\zz(\ss))\cdot \(\sig^s(\zz(\ss)) \(\frac{\pa
\zz}{\pa s_1}\times\frac{\pa \zz}{\pa s_2}\)\) ds_1 ds_2.
\label{bdit.2}
\end{equation}
With the same idea, \qref{dn1.4} can be rewritten as
\begin{equation} \label{antman.2}
 \int_{\vpi} \sig^f\(\xx(\ss)\)
 \left(\frac{\pa \xx}{\pa s_1} \times \frac{\pa \xx}{\pa s_2}\right)
 -
 \sig^s(\zz(\ss))
 \left(\frac{\pa \zz}{\pa s_1}\times\frac{\pa \zz}{\pa s_2}\right) \; ds_1ds_2=0.
\end{equation}
As the above equation is true for any $\vpi$, the integrand must be zero.
Then, since $\tphi^f\(\xx(\ss)\)=\tphi^s\(\zz(\ss)\)$, we get
\begin{equation} \label{bdit.3}
 \tphi^f\(\xx(\ss)\) \cdot\( \sig^f\(\xx(\ss)\)
 \left(\frac{\pa \xx}{\pa s_1} \times \frac{\pa \xx}{\pa s_2}\right)\)
 -
 \tphi^s\(\zz(\ss)\) \cdot \( \sig^s(\zz(\ss))
 \left(\frac{\pa \zz}{\pa s_1}\times\frac{\pa \zz}{\pa s_2}\right)\)  =0.
\end{equation}
Now, putting \qref{bdit.1},\qref{bdit.2},\qref{bdit.3} together, we obtain \qref{bdc}.
\end{proof}

\medskip

\noindent {\bf Remark}: Since
$
 \frac{\pa \xx}{\pa s_1} \times \frac{\pa \xx}{\pa s_2}
 =\(\frac{\pa \xx}{\pa \zz}\frac{\pa \zz}{\pa s_1}\) \times \(\frac{\pa \xx}{\pa \zz}\frac{\pa \zz}{\pa s_2}\)
 =\(\det\frac{\pa \xx}{\pa \zz}\)\(\frac{\pa \xx}{\pa \zz}\)^{-\top} \(\frac{\pa \zz}{\pa s_1} \times \frac{\pa \zz}{\pa s_2}\)
$ and $\left(\frac{\pa \zz}{\pa s_1} \times \frac{\pa \zz}{\pa
s_2}\right)ds_1ds_2 = -\nn^s da(\zz)$, \qref{antman.2} can be
written as
\begin{equation} \label{antman.3}
 \int_{\vpi} \sig^f\(\xx\) \(\det\frac{\pa \xx}{\pa \zz}\) \(\frac{\pa \xx}{\pa \zz}\)^{-\top} \nn^s    -
 \sig^s(\zz) \nn^s \; da(\zz)=0.
\end{equation}
So, we can use
\begin{equation} \label{antman.4}
 \(\det\frac{\pa \xx}{\pa \zz}\)\sig^f\(\xx(\zz)\) \(\frac{\pa \xx}{\pa \zz}\)^{-\top} \nn^s    -
 \sig^s(\zz) \nn^s =0 \quad \mbox{ on }\Gam
\end{equation}
as the boundary condition for the continuity of traction (e.g.
\cite[page 489, (15.34)]{An}). However, as \qref{antman.4} contains
Jacobian, it is not as friendly to numerics as \qref{bdc}.

\subsection{Combined field formulation}
We will use velocity field as the unknown for both fluid and solid because
it leads to a simple enforcement of
the boundary condition \qref{dn1.3}.
The combined field formulation for time dependent FS interaction is
as follows:
\begin{align}
     \rho^f (\pa_t \uu+\uu\cdot\grad\uu) = \div \sig^f(\uu,p) + \rho^f\gg^f, \qquad \div\uu=0 \qquad\mbox{ in }\Omg^f_{(t)},    \label{dn6.1}\\
     \rho^s \pa_t \vv = \div \sig^s(\vphi_0+\mbox{$\int_{0}^t$} \vv(\tau)d\tau) +\rho^s\gg^s \qquad\mbox{ in }\Omg^s, \label{dn6.2} \\
     \uu(\vphi(\zz,t),t)=\vv(\zz,t) \qquad \forall \zz\in \Gam, \label{dn6.3} \\
     \int_{\Gam} \tphi^s(\zz)\sig^s(\zz,t) \nn^s da(\zz)+\int_{\vphi(\Gam,t)} \!\!\!\tphi^f(\xx)\sig^f(\xx,t) \nn^f
     da(\xx)=0  \; \quad \forall \tphi^s, \label{dn6.4} \\
     \uu|_{\Sigma_1}=\uu_b, \qquad \sig^f\nn^f|_{\Sigma_2}=\sig^f_b, \qquad
     \vv|_{\Sigma_3}=\pa_t\vphi_b, \qquad \sig^s\nn^s|_{\Sigma_4}=\sig^s_b, \label{dn6.5}
\end{align}
where $\tphi^f(\xx)$ is defined by $\tphi^s$ through
$\tphi^f(\vphi(\zz,t))=\tphi^s(\zz)$ for any $\zz\in \Gam$.
$\Gam$ is the FS interface at $t=0$.
$\vphi(\zz,t)=\vphi_0(\zz)+\mbox{$\int_{0}^t$} \vv(\zz,\tau)d\tau$.
$\Sigma_1\cup \Sigma_2=\pa\Omg^f_{\t}\backslash \vphi(\Gam,t)$ and
$\Sigma_3\cup \Sigma_4=\pa\Omg^s\backslash \Gam$ are fixed boundaries. At the initial time $t=0$, we are given $\uu_0$, $\vv_0$ and $\vphi_0$
being the initial fluid velocity, initial solid velocity and initial solid position.

Now we will derive a weak form of \qref{dn6.1}--\qref{dn6.5}. Suppose we know a function $\vphi$ defined on $\Gam$,
introduce
\begin{align}
 V^{\vphi}_{(t)} = \Big\{(\uu;p;\vv): \quad & \uu\in H^1(\Omg^f_{(t)}), \quad p\in L^2(\Omg^f_{(t)}), \quad
 \vv\in W^{1,\infty}(\Omg^s), \nonumber \\
 & \uu(\vphi(\zz))=\vv(\zz) \; \forall \zz\in \Gam \Big\}.
\end{align}
Note that $\uu$ and $\vv$ are defined on different domains (see Fig.~\ref{F.1} for an illustration).
$\Omg^s$ is given from the very beginning and will never change. $\Omg^f_{(t)}$ is varying with respect to
$t$. We require $\vv\in W^{1,\infty}(\Omg^s)$ since $\sig^s$ can be rather nonlinear.
But for linear elasticity \qref{lin.material}, requiring $\vv\in H^1(\Omg^s)$ is enough. 

\begin{theorem} \label{T.weak}
The system \qref{dn6.1}--\qref{dn6.5}
has the following weak form: We are looking for $(\uu(\cdot,t);p(\cdot,t);\vv(\cdot,t))\in
V^{\vphi(\cdot,t)}_{(t)}$ with $\uu|_{\Sigma_1}=\uu_b$,
$\vv|_{\Sigma_3}=\pa_t\vphi_b$, such that for any $t\in[0,T]$,
\begin{align}
     \<\rho^f \(\pa_t \uu+\uu\cdot\grad\uu\), \tphi^f\>_{\Omg^f_{(t)}} + \<\sig^f(\uu,p), \grad \tphi^f\>_{\Omg^f_{(t)}}
     - \<\div\uu, q^f \>_{\Omg^f_{(t)}}\nonumber \\
     +  \<\rho^s \pa_t \vv ,\tphi^s\>_{\Omg^s} + \<\sig^s(\vphi_0+\mbox{$\int_{0}^t$} \vv(\tau)d\tau), \grad\tphi^s\>_{\Omg^s}\nonumber \\
     = \<\rho^f\gg^f,\tphi^f\>_{\Omg^f_{(t)}} + \<\sig_b^f,\tphi^f\>_{\Sigma_2} + \<\rho^s\gg^s,\tphi^s\>_{\Omg^s}
     + \<\sig_b^s,\tphi^s\>_{\Sigma_4} \label{onefield.1}
\end{align}
for any $(\tphi^f;q^f;\tphi^s)\in V^{\vphi(\cdot,t)}_{(t)}$ satisfying
$\tphi^f|_{\Sigma_1}=0$, $\tphi^s|_{\Sigma_3}=0$. The $\vphi(\cdot,t)$ in $V^{\vphi(\cdot,t)}_{(t)}$
is defined by $\vphi(t)=\vphi_0+\mbox{$\int_{0}^t$} \vv(\tau)d\tau$.
\end{theorem}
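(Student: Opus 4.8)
The plan is to derive \qref{onefield.1} by the standard recipe of multiplying each strong equation by a test function, integrating over the appropriate domain, and integrating the stress-divergence term by parts; the whole argument then hinges on recognizing that the two interface traction integrals that survive cancel exactly by Lemma~\ref{L.antman}. First I would multiply the fluid momentum equation in \qref{dn6.1} by $\tphi^f$ and integrate over $\Omg^f_\t$. Using $\<\div\sig^f,\tphi^f\>_{\Omg^f_\t}=-\<\sig^f,\grad\tphi^f\>_{\Omg^f_\t}+\int_{\pa\Omg^f_\t}\tphi^f\cdot\(\sig^f\nn^f\)$ and splitting the boundary as $\pa\Omg^f_\t=\vphi(\Gam,t)\cup\Sigma_1\cup\Sigma_2$, the $\Sigma_1$ contribution drops because $\tphi^f|_{\Sigma_1}=0$, the $\Sigma_2$ contribution becomes $\<\sig^f_b,\tphi^f\>_{\Sigma_2}$ by the Neumann condition in \qref{dn6.5}, and only the interface piece $\int_{\vphi(\Gam,t)}\tphi^f\cdot\(\sig^f\nn^f\)$ remains. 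The incompressibility constraint $\div\uu=0$ is tested against $q^f$ to produce $-\<\div\uu,q^f\>_{\Omg^f_\t}$.

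Next I would repeat this for the solid momentum equation \qref{dn6.2}, multiplying by $\tphi^s$ and integrating over $\Omg^s$. Integration by parts yields $-\<\sig^s,\grad\tphi^s\>_{\Omg^s}$ together with a boundary integral over $\pa\Omg^s=\Gam\cup\Sigma_3\cup\Sigma_4$; the $\Sigma_3$ piece vanishes since $\tphi^s|_{\Sigma_3}=0$, the $\Sigma_4$ piece gives $\<\sig^s_b,\tphi^s\>_{\Sigma_4}$, and the interface integral $\int_\Gam\tphi^s\cdot\(\sig^s\nn^s\)$ is left over. Here I would emphasize that the velocity-continuity condition \qref{dn6.3} and the Dirichlet conditions on $\Sigma_1,\Sigma_3$ are not imposed as separate equations but are already encoded in the trial and test space $V^{\vphi(t)}_{(t)}$ through $\uu(\vphi(\zz))=\vv(\zz)$ and $\tphi^f(\vphi(\zz,t))=\tphi^s(\zz)$.

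The crucial step is to add the fluid-momentum, pressure, and solid-momentum identities. On the left this assembles exactly the five bilinear terms of \qref{onefield.1}, and on the right all the volume terms and the $\Sigma_2,\Sigma_4$ data terms appear; the only remaining pieces are the two interface integrals $\int_{\vphi(\Gam,t)}\tphi^f\cdot\(\sig^f\nn^f\)+\int_\Gam\tphi^s\cdot\(\sig^s\nn^s\)$. But with $\ee=\Gam$ this is precisely the left-hand side of \qref{bdc}, which Lemma~\ref{L.antman} (a restatement of the traction-continuity condition \qref{dn6.4}) guarantees is zero, because the test functions satisfy $\tphi^f(\vphi(\zz,t))=\tphi^s(\zz)$. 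This cancellation is the heart of the argument and the reason the combined formulation enforces traction continuity automatically; thus \qref{onefield.1} holds for every admissible $(\tphi^f;q^f;\tphi^s)$.

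For the converse I would run the computation backwards: testing with $\tphi^f$ (respectively $\tphi^s$) compactly supported in $\Omg^f_\t$ (respectively $\Omg^s$) and with arbitrary $q^f$ recovers \qref{dn6.1}, \qref{dn6.2} and $\div\uu=0$ in the interior, after which reintegrating by parts forces the surviving boundary integrals to vanish for all admissible test functions, returning the Neumann conditions on $\Sigma_2,\Sigma_4$ and, via the matched traces on $\Gam$, the traction condition \qref{dn6.4} in the form \qref{bdc}. I do not expect any serious obstacle: the work is routine integration by parts and boundary bookkeeping, and the single nontrivial observation — that the leftover interface terms are annihilated by Lemma~\ref{L.antman} — has effectively been prepared in advance by the careful design of the space $V^{\vphi(t)}_{(t)}$.
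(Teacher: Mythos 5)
Your proposal is correct and follows essentially the same route as the paper: test the fluid momentum, incompressibility, and solid momentum equations, integrate the stress terms by parts, add, and observe that the two interface integrals cancel — the only cosmetic difference being that the paper invokes \qref{dn6.4} directly (which in the combined-field system is already stated in the test-function form produced by Lemma~\ref{L.antman}), while you route the cancellation through the lemma itself. Your extra converse paragraph goes beyond what the paper records, but it is a routine reversal and raises no issues.
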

\begin{proof}
The boundary condition \qref{dn6.3} has been built into the function space $V^{\vphi(\cdot,t)}_{(t)}$.
We dot (\ref{dn6.1}a) with $\tphi^f$, dot (\ref{dn6.1}b) with $q^f$ and dot \qref{dn6.2} with $\tphi^s$.
After integration by parts for the $\sig^f$ and $\sig^s$ terms, we add the resulting
equations together. The boundary integrals on $\vphi(\Gam,t)$
and $\Gam$ cancel each other because of \qref{dn6.4} and the definition of $V^{\vphi(\cdot,t)}_{(t)}$.
\end{proof}


%

\subsection{Stability identity for \qref{onefield.1}}
We want to derive some stability identity for \qref{onefield.1}
when the solid has a strain energy \qref{strain}.
Once again, we denote $\grad\vphi(\zz,t)$ by $\FF(\zz,t)$. With the $I_s$ defined in \qref{strain},
\begin{equation} \label{stab.dtI}
 \frac{d}{dt} I_s(\vphi(\cdot,t)) = \int_{\Omg^s} \frac{\pa W(\FF)}{\pa \FF}:\grad \pa_t\vphi(\zz,t) d\zz
 = \<\sig^s(\vphi(\cdot,t)),\grad\vv(\cdot,t)\>_{\Omg^s}.
\end{equation}

We have the following identity for divergence free velocity field
which can be applied to the first term in \qref{onefield.1} when
$\uu_b=0$ and $\Sigma_2=\emptyset$:
\begin{lemma} \label{L.stab}
Consider a divergence free velocity field $\uu(\xx,t)$ defined on a
time varying domain $\Omg^f_{(t)}$. Assume at any point on
$\pa\Omg^f_{(t)}$, either $\uu\cdot\nn=\ww\cdot\nn$ or $\uu=0$ where
$\nn$ is the outward normal and $\ww$ is the velocity that
$\pa\Omg^f_{(t)}$ moves. Then
\begin{equation} \label{Reynolds.1}
 \frac12 \frac{d}{dt} \|\uu\|^2_{\Omg^f_{(t)}} = \< \pa_t \uu+\uu\cdot\grad\uu, \uu\>_{\Omg^f_{(t)}} .
\end{equation}
\end{lemma}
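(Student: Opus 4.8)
The plan is to reduce \qref{Reynolds.1} to the Reynolds transport theorem together with an integration by parts that exploits $\div\uu=0$. First I would apply the transport theorem to the scalar $f=\frac12|\uu|^2$ on the moving domain $\Omg^f_\t$ whose boundary travels with velocity $\ww$, which gives
\begin{equation}
 \frac12\frac{d}{dt}\|\uu\|^2_{\Omg^f_\t}
 = \<\pa_t\uu,\uu\>_{\Omg^f_\t}
 + \int_{\pa\Omg^f_\t} \tfrac12|\uu|^2\,(\ww\cdot\nn)\,da. \nonumber
\end{equation}
The first term on the right is already half of the target; it remains to show that the boundary integral equals $\<\uu\cdot\grad\uu,\uu\>_{\Omg^f_\t}$.

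For that, I would rewrite the convective term pointwise as $(\uu\cdot\grad\uu)\cdot\uu = \uu\cdot\grad(\tfrac12|\uu|^2)$, and since $\div\uu=0$ this is $\div(\tfrac12|\uu|^2\,\uu)$. Integrating over $\Omg^f_\t$ and applying the divergence theorem converts it into a boundary flux,
\begin{equation}
 \<\uu\cdot\grad\uu,\uu\>_{\Omg^f_\t}
 = \int_{\pa\Omg^f_\t} \tfrac12|\uu|^2\,(\uu\cdot\nn)\,da. \nonumber
\end{equation}

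The only point that genuinely uses the hypothesis is matching the two boundary integrands, i.e.\ replacing $\uu\cdot\nn$ by $\ww\cdot\nn$; this is the step I would flag as the crux. Here the stated dichotomy does the work: at each boundary point either $\uu=0$, in which case $\tfrac12|\uu|^2(\uu\cdot\nn)=0=\tfrac12|\uu|^2(\ww\cdot\nn)$ trivially, or $\uu\cdot\nn=\ww\cdot\nn$, in which case the two integrands agree directly. Hence the two boundary integrals coincide, and adding the two displayed identities yields \qref{Reynolds.1}. I do not anticipate any deep difficulty; the things to be careful about are the regularity of $\uu$ and of the domain motion needed for the transport theorem and the boundary trace, and the observation that the stated boundary dichotomy is exactly what eliminates the mesh velocity $\ww$ from the final identity --- the continuous analogue of the cancellation the paper later reproduces at the discrete level.
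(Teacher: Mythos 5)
Your proposal is correct and follows essentially the same route as the paper's proof: Reynolds transport theorem applied to $\tfrac12|\uu|^2$, the identity $(\uu\cdot\grad\uu)\cdot\uu=\div(\tfrac12|\uu|^2\uu)$ from incompressibility, and the boundary dichotomy to exchange $\uu\cdot\nn$ with $\ww\cdot\nn$. The only (immaterial) difference is ordering --- the paper substitutes $\ww\cdot\nn\to\uu\cdot\nn$ in the boundary term first and then converts it back into the volume integral, whereas you convert the convection term to a boundary flux and match afterwards.
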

\begin{proof}
Recall the following Reynolds transport theorem
(\cite[page 488, (15.23)]{An}): for any $\eta$,
\begin{equation} \label{Reynolds.2}
 \frac{d}{dt}\int_{\Omg_{\t}^f} \eta(\xx,t) d\xx = \int_{\Omg_{\t}^f} \pa_t \eta(\xx,t) + \int_{\pa \Omg_{\t}^f} \eta(\xx,t)\ww(\xx,t)\cdot\nn.
\end{equation}
So,
\begin{align}
 \frac12 \frac{d}{dt} \|\uu\|^2_{\Omg^f_{(t)}} = & \frac12 \frac{d}{dt}\int_{\Omg^f_{\t}} |\uu(\xx,t)|^2 d\xx
 = \int_{\Omg^f_{\t}} \pa_t \uu \cdot\uu + \frac12 \int_{\pa \Omg^f_{\t}}  |\uu|^2 \ww\cdot\nn \nonumber \\
 =& \int_{\Omg^f_{\t}} \pa_t \uu \cdot\uu + \frac12 \int_{\pa \Omg^f_{\t}}  |\uu|^2 \uu\cdot\nn. \label{temp.1}
\end{align}
In the last step we have used the condition either
$\uu\cdot\nn=\ww\cdot\nn$ or $\uu=0$ on $\pa\Omg^f_{\t}$. Using the divergence free condition, the last surface integral
can be rewritten as
\begin{align}
 \frac12 \int_{\pa \Omg^f_{\t}}  |\uu|^2 \uu\cdot\nn
 =\int_{\Omg^f_{\t}} \frac12 \div\(|\uu|^2 \uu\) d\xx
 =\int_{\Omg^f_{\t}} (\uu\cdot\grad\uu) \cdot\uu d\xx. \label{temp.2}
\end{align}
We have used $\div\uu=0$ in the last step. Plugging \qref{temp.2} into \qref{temp.1}, we get \qref{Reynolds.1}.
\end{proof}

Combining \qref{stab.dtI} and \qref{Reynolds.1}, if we let $\tphi^f=\uu$, $q^f=-p$
and $\tphi^s=\vv$ in \qref{onefield.1},
we obtain the following result:
\begin{theorem} \label{P.stab}
When the solid has strain energy \qref{strain} and when $\uu_b=0$,
$\Sigma_2=\emptyset$, $\pa_t\vphi_b=0$ in \qref{dn6.5},
the solution of \qref{onefield.1} satisfies
\begin{align}
 \frac{\rho^f}{2}&\frac{d}{dt}\|\uu(\cdot,t)\|_{\Omg^f_{(t)}}^2
  + \frac{\rho^f\nu^f}{2}\|\grad\uu+\grad\uu^\top\|_{\Omg^f_{(t)}}^2
  + \frac{\rho^s}{2}\frac{d}{dt}\|\vv(\cdot,t)\|_{\Omg^s}^2
  \nonumber \\
 &+ \frac{d}{dt} I_s(\vphi_0\!+\!\mbox{$\int_{0}^t$}\vv(\tau)d\tau)
= \<\rho^f\gg^f,\uu\>_{\Omg^f_{(t)}} + \<\rho^s\gg^s,\vv\>_{\Omg^s} + \<\sig_b^s,\vv\>_{\Sigma_4}. \label{stab.f.3}
\end{align}
\end{theorem}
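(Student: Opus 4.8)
The plan is to test the weak form \qref{onefield.1} against the solution itself, taking $\tphi^f=\uu$, $q^f=-p$, and $\tphi^s=\vv$, and then to match each resulting term with a term of \qref{stab.f.3}. Before substituting I would check admissibility: since $(\uu;p;\vv)\in V^{\vphi(\cdot,t)}_{(t)}$ the triple $(\uu;-p;\vv)$ lies in the same space, and the homogeneous data $\uu_b=0$, $\pa_t\vphi_b=0$ give $\uu|_{\Sigma_1}=0$ and $\vv|_{\Sigma_3}=0$, which are precisely the constraints imposed on test functions.

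The term-by-term matching is then mostly mechanical. The combined time-derivative and convection term $\<\rho^f(\pa_t\uu+\uu\cdot\grad\uu),\uu\>_{\Omg^f_{(t)}}$ collapses to $\frac{\rho^f}{2}\frac{d}{dt}\|\uu\|^2_{\Omg^f_{(t)}}$ by Lemma~\ref{L.stab}. For the stress term I would expand $\sig^f(\uu,p)=\rho^f\nu^f(\grad\uu+\grad\uu^\top)-p\,I$; the pressure piece contributes $\int_{\Omg^f_{(t)}} p\,\div\uu=0$, while contracting the symmetric tensor $\grad\uu+\grad\uu^\top$ against $\grad\uu$ equals contracting it against its own symmetric part, giving exactly $\frac{\rho^f\nu^f}{2}\|\grad\uu+\grad\uu^\top\|^2_{\Omg^f_{(t)}}$. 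The incompressibility test term $-\<\div\uu,-p\>$ vanishes by $\div\uu=0$. On the solid, $\<\rho^s\pa_t\vv,\vv\>_{\Omg^s}=\frac{\rho^s}{2}\frac{d}{dt}\|\vv\|^2_{\Omg^s}$ on the fixed reference domain, and the elastic stress term becomes $\frac{d}{dt}I_s(\vphi_0+\int_0^t\vv\,d\tau)$ by the energy identity \qref{stab.dtI}. On the right-hand side $\Sigma_2=\emptyset$ deletes the fluid-traction integral, leaving the three forcing terms of \qref{stab.f.3}.

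The one step that needs genuine care, and which I expect to be the crux, is verifying the hypothesis of Lemma~\ref{L.stab}: at every point of $\pa\Omg^f_{(t)}$ either $\uu=0$ or $\uu\cdot\nn=\ww\cdot\nn$, where $\ww$ is the velocity of the moving boundary. On $\Sigma_1$ this holds because $\uu=\uu_b=0$, and $\Sigma_2$ is empty. The substantive case is the moving interface $\vphi(\Gam,t)$: since $\vphi(\zz,t)=\vphi_0+\int_0^t\vv\,d\tau$ this portion of the boundary is transported with velocity $\ww=\pa_t\vphi=\vv$, while the kinematic coupling \qref{dn6.3} asserts $\uu(\vphi(\zz,t),t)=\vv(\zz,t)$; hence $\uu=\ww$ on $\vphi(\Gam,t)$ and a fortiori $\uu\cdot\nn=\ww\cdot\nn$. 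This is exactly where the continuity-of-velocity condition enters and makes the boundary integral in the Reynolds transport argument vanish. I would also note that the traction coupling \qref{dn6.4} does not reappear here, because it was already consumed in cancelling the interface integrals when \qref{onefield.1} was derived. Collecting the matched terms then yields \qref{stab.f.3}.
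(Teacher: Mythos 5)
Your proposal is correct and follows essentially the same route as the paper's proof, which simply combines \qref{stab.dtI} and Lemma~\ref{L.stab} after taking $\tphi^f=\uu$, $q^f=-p$, $\tphi^s=\vv$ in \qref{onefield.1}. Your additional checks (admissibility of the test triple, the interface hypothesis $\uu=\ww$ on $\vphi(\Gam,t)$ via \qref{dn6.3}, and the symmetric-part contraction for the viscous term) are exactly the details the paper leaves implicit.
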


\subsection{Conservative formulation of Arbitrary Lagrangian Eulerian (ALE) description}

When the time reaches $t^n$ (which can be any number), we choose the $\Omg^f_{(t^n)}$ as the reference domain
and construct a backward in time mapping $\Phi^n(\cdot,t)$ that maps $\xx\in \Omg^f_{(t^n)}$ to $\yy=\Phi^n(\xx,t)\in\Omg^f_{(t)}$
for any $t \le t^n$. See \qref{Phit} for a way of constructing $\Phi^n$. Note that $\Phi^n(\xx,t^n)=\xx$.
Define
\begin{equation}
 \ww^n(\xx,t)=\pa_t\Phi^n(\xx,t). \label{ww}
\end{equation}

With this $t^n$ fixed, we set $t=t^n$ in \qref{onefield.1} and choose test function $\tphi^f(\cdot,t)$ satisfing
\begin{equation} \label{ale.22}
  \frac{d}{dt}\tphi^f(\Phi^n(\xx,t),t)=0  \qquad \forall x\in \Omg_{(t^n)}.
\end{equation}
In a finite element method, requiring \qref{ale.22} for a basis function $\tphi^f$ means
the function is always attached to the nodal point it starts from no matter how the mesh moves.
See \qref{phi-st},\qref{invariant}.
Evaluating \qref{ale.22} at $t=t^n$ and using $\Phi^n(\xx,t^n)=\xx$, we obtain
\begin{equation} \label{ale.41}
 \pa_t \tphi^f(\xx,t^n)=-(\pa_t\Phi^n(\xx,t^n))\cdot\grad\tphi^f(\Phi^n(\xx,t^n),t^n) = -\ww^n(\xx,t^n)\cdot\grad\tphi^f(\xx,t^n).
\end{equation}

As $\ww^n$ is exactly the velocity of domain $\Omg^f_{(t^n)}$, we can apply the Reynolds transport theorem \qref{Reynolds.2}
with $\eta = \uu(\xx,t) \cdot \tphi^f(\xx,t)$ to get
\begin{align}
  & \left.\frac{d}{dt}\right|_{t=t^n}\!\!\<\uu(\cdot,t),\tphi^f(\cdot,t)\>_{\Omg^f_{\t}}
 \!\!=\!\int_{\Omg^f_{(t^n)}} \!\! (\pa_t \uu)\cdot\tphi^f + \uu \cdot (\pa_t \tphi^f) + \div\(\uu\cdot\tphi^f \ww^n\), \label{ale.23}
\end{align}
where the integrand on the right hand side is evaluated at $t=t^n$. Because of \qref{ale.41},
the sum of the last two terms in the integrand equals
$\uu \cdot (\pa_t \tphi^f) + \div\(\uu\cdot\tphi^f \ww^n\) = \tphi^f_k \pa_j \(\uu_k \ww^n_{j}\) =\(\div(\uu\otimes \ww^n)\)\cdot\tphi^f$.
So, using $\uu\cdot\grad\uu=\div\(\uu\otimes\uu\)$, \qref{ale.23} leads to
\begin{align}
 & \<\pa_t \uu(\cdot,t^n)+\uu(\cdot,t^n)\cdot\grad\uu(\cdot,t^n), \tphi^f(\cdot,t^n)\>_{\Omg^f_{(t^n)}} \nonumber \\
  = & \left.\frac{d}{dt}\right|_{t=t^n}\<\uu(\cdot,t),\tphi^f(\cdot,t)\>_{\Omg^f_{(t)}} \!\!
    +\<\div\(\uu\otimes(\uu-\ww^n(\cdot,t^n))\), \tphi^f(\cdot,t^n) \>_{\Omg^f_{(t^n)}}\label{ale.24}
\end{align}
To summarize, if
$(\uu(\cdot,t);p(\cdot,t);\vv(\cdot,t))\in
V^{\vphi(\cdot,t)}_{(t)}$ with $\uu|_{\Sigma_1}=\uu_b$,
$\vv|_{\Sigma_3}=\pa_t\vphi_b$ is a solution to \qref{dn6.1}--\qref{dn6.5},
then $(\uu;p;\vv)$ satisfies the following:
For any $t^n\le T$,
for any given backward in time mapping $\Phi^n(\cdot,t):\Omg_{(t^n)}^f\to \Omg_{(t)}^f$,
for any $\tphi^f(\cdot,\cdot)$ defined in the space-time domain of the fluid, and satisfying
$\left.\frac{d}{dt}\right|_{t=t^n}\tphi^f(\Phi^n(\cdot,t),t)=0$ and $\tphi^f(\cdot,t^n)|_{\Sigma_1}=0$,
for any $\tphi^s$ defined in $\Omg^s$ and satisfying
$\tphi^s|_{\Sigma_3}=0$ and $(\tphi^f(\cdot,t^n);q^f;\tphi^s)\in V^{\vphi(\cdot,t^n)}_{(t^n)}$,
\begin{align}
     &\left.\frac{d}{dt}\right|_{t=t^n}\!\!\<\uu(\cdot,t),\tphi^f(\cdot,t)\>_{\Omg^f_{(t)}} \!\!\!
     +\<\nabla\!\cdot\!\(\uu\otimes(\uu\!-\!\ww^n(\cdot,t^n))\), \tphi^f(\cdot,t^n) \>_{\Omg^f_{(t^n)}} \nonumber \\
     & + \<\sig^f(\uu,p), \grad \tphi^f(\cdot,t^n)\>_{\Omg^f_{(t^n)}}- \<\div\uu, q^f \>_{\Omg^f_{(t^n)}} \nonumber \\
     & +\<\rho^s \pa_t \vv(\cdot,t^n) ,\tphi^s\>_{\Omg^s}
     + \<\sig^s(\vphi_0+\mbox{$\int_{0}^{t^n}$} \vv(\tau)d\tau), \grad\tphi^s\>_{\Omg^s} \nonumber \\
     = & \<\rho^f\gg^f,\tphi^f\>_{\Omg^f_{(t^n)}} + \<\sig_b^f,\tphi^f\>_{\Sigma_2} + \<\rho^s\gg^s,\tphi^s\>_{\Omg^s}
     + \<\sig_b^s,\tphi^s\>_{\Sigma_4}. \label{onefield.conserve}
\end{align}

\section{Fully discrete scheme}
Now we turn to the fully discrete scheme.
The initial set up is as follows:
First of all, there is a mesh $\TT^s_h$ for the
solid reference domain $\Omg^s$. Part of the boundary grid points of $\TT^s_h$ form a mesh
$\TT^\Gam_h$ for $\Gam$.
Suppose we are given $\uu_h^{n-1}$, $\vv_h^{n-1}$, $\vphi_h^{n-1}$ and
suppose we also have the mesh $\TT^f_{h,t^{n-1}}$ of domain $\Omg^f_{h,t^{n-1}}$ where $\Omg^f_{h,t^{n-1}}$
is our numerical approximation for the fluid domain at time $t^{n-1}$.
Without loss of generality, we use $\PP_m/\PP_{m-1}/\PP_m$
elements for fluid velocity, fluid pressure and solid velocity.
For efficiency and also to meet the requirement of optimal
isoparametric finite element mesh, we require that an edge of
$\TT^f_{h,t^{n-1}}$ (or $\TT^s_h$) is straight when it does not belong to the
boundary of $\Omg^f_{h,t^{n-1}}$ (or $\TT^s_h$)
and is curved otherwise. Here edge
refers to both surface and edge if $d=3$.

\subsection{Solid part}
The first order in time discretization for the solid part (3rd line in \qref{onefield.conserve}) is rather simple.
So we discuss it first.
Recall at time $t^{n-1}$, we have $\vv_h^{n-1}$
and $\vphi_h^{n-1}$ on $\Omg^s$. For the next moment $t^n$, define
\begin{equation} \label{vphihn}
  \vphi_h^n = \vphi_h^{n-1}+\Dt\vv_h^{n-1}.
\end{equation}
Through out this paper, $\vphi_h^n$ is always constructed explicitly in this way.
To determine $\vv_h^n$, we have two different approaches:

\subsubsection{Efficient semi-implicit discretization for nonlinear material} 
When $\sig^s=\sig^s(\vphi)$, we have:
\begin{equation} \label{increment}
 \<\sig^{s}(\vphi(t^{n+1}\!)),\grad\tphi^s \>_{\Omg^s} \!\approx
 \<\sig^{s}(\vphi(t^{n}\!)),\grad\tphi^s \>_{\Omg^s}\! +
 A_s\!\(\grad\vphi(t^{n}\!);\grad\(\vphi(t^{n+1}\!)\!\!-\!\!\vphi(t^{n}\!)\),\grad\tphi^s\).
\end{equation}
where $A_s\(\grad\vphi;\grad\vpsi,\grad\tphi^s\)
=\frac{d}{d\epsilon}|_{\epsilon=0} \<\sig^{s}(\vphi+\epsilon\vpsi),\grad\tphi^s \>_{\Omg^s}$.
The exact formulas for the variational derivatives of various materials
should be widely available in the literature as they are used to derive the Newton's method for
solving $\<\sig^{s}(\vphi),\grad\tphi^s
\>_{\Omg^s}=\<\rho^s\gg^s,\tphi^s\>_{\Omg^s}$.
For Saint Venant-Kirchhoff material
\qref{svk.constitutive}, 
\begin{align}
 A_s(\FF;\GG,\HH)=\int_{\Omg^s}\Big(\lam^s (\mbox{tr} \EE) \HH:\GG + \frac{\lam^s}{4}
 \mbox{tr}(\HH^\top \FF + \FF^\top \HH)\mbox{tr}(\GG^\top \FF + \FF^\top \GG )
 \nonumber \\
 + \mu^s \EE:(\HH^\top \GG + \GG^\top \HH) + \frac{\mu^s}{2} (\HH^\top \FF + \FF^\top \HH):(\GG^\top \FF+\FF^\top \GG )\Big),
 \label{svk.As}
\end{align}
where $\EE=\frac{1}{2}\(\FF\FF^\top-I\)$ and $\HH:\GG=\mbox{tr}(\HH^\top\GG)$.
$A_s(\FF;\GG,\HH)$ is a bilinear functional of $\GG$ and $\HH$.
Of course, if we use linear constitutive equation \qref{lin.material},
the approximately equal sign in \qref{increment}
becomes the equal sign.

Because of \qref{increment} and \qref{vphihn}, at time $t^n$, the solid part in \qref{onefield.conserve}
(the 3rd line in \qref{onefield.conserve})
can be approximated by
\begin{align}
     \<\rho^s \frac{\vv_h^{n}-\vv_h^{\npp}}{\Dt}\! ,\tphi^s\>_{\Omg^s}
     & + \<\sig^s(\vphi_h^{n}), \grad\tphi^s\>_{\Omg^s}
     + \Dt A_s\(\grad\vphi_h^{n};\grad\vv_h^{n},\grad\tphi^s\) =\cdots, \label{solid.lin.3}
\end{align}
where we use $\cdots$ to denote the fluid part which will be discussed in Section~\ref{S.fluid}.
The above scheme is linear for the unknown $\vv_h^n$.

\subsubsection{Stable implicit discretization for material with convex strain energy} \label{S.solid.stab}
The above semi-implicit discretization is what we will use in our numerical test. But to prove
unconditional stability for more general nonlinear elastic solid,
we need to consider fully implicit discretization. The existence of the solution will be address in
a later section (Section~\ref{S.existence}) after we also include the fluid variables.

Assume the $W(\FF)$ in \qref{strain} is convex, then
$W(\grad (\vphi+\tphi)) \ge W(\grad \vphi) + \frac{\pa W(\grad\vphi)}{\pa \FF}:\grad\tphi$. Therefore, with
$\vphi=\vphi^{n+1}$ and $\tphi=\vphi^n-\vphi^{n+1}$,
\begin{equation} \label{convex}
 I_s(\vphi^{n})\ge I_s(\vphi^{n+1}) + \<\sig^s(\vphi^{n+1}),\grad(\vphi^n-\vphi^{n+1})\>_{\Omg^s}.
\end{equation}
So, to gain stability, we can discretize the solid part in \qref{onefield.conserve} (the 3rd line in \qref{onefield.conserve}) by
\begin{align}
     \<\rho^s \frac{\vv_h^{n}-\vv_h^{\npp}}{\Dt}\! ,\tphi^s\>_{\Omg^s}
     & + \<\sig^s(\vphi_h^{n}+\Dt\vv_h^n), \grad\tphi^s\>_{\Omg^s} =\cdots. \label{solid.lin.6}
\end{align}
Because of \qref{vphihn} and \qref{convex},
\[
 \Dt\<\sig^s(\vphi_h^{n}+\Dt\vv_h^n), \grad\vv_h^n\>_{\Omg^s}\!=\!\<\sig^s(\vphi_h^{n+1}),\grad(\vphi_h^{n+1}-\vphi_h^n)\>_{\Omg^s}
 \!\!\ge I_s(\vphi_h^{n+1})-I_s(\vphi_h^{n}).
\]
Define
\begin{equation} \label{strain.2}
 \IIs(\vv;\vphi)=\frac{\rho^s}{2}\|\vv\|^2_{\Omg^s} + I_s(\vphi)=
 \frac{\rho^s}{2}\|\vv\|^2_{\Omg^s} + \int_{\Omg^s}W(\grad\vphi).
\end{equation}
Letting $\tphi^s=\vv_h^n$, \qref{solid.lin.6}
leads to
\begin{equation} \label{solid.5}
 \frac{1}{\Dt}\IIs(\vv_h^n;\vphi_h^{n+1})-\frac{1}{\Dt}\IIs(\vv_h^{n-1};\vphi_h^{n}) \le \cdots.
\end{equation}
We have used $(a_n-a_{n-1})a_n = \frac12 (a_n^2 - a_{n-1}^2 + (a_n-a_{n-1})^2) \ge \frac12 a_n^2 - \frac12 a_{n-1}^2$.

\subsubsection{Special case} \label{S.linear}
When the solid is linear elastic (\qref{lin.material}),
discretizations \qref{solid.lin.3} and \qref{solid.lin.6} coincide and their
left hand sides become
\begin{align}
  \<\!\rho^s \frac{\eeta_h^{n\!+\!1}\!-\!2 \eeta_h^{n}\!+\!\eeta_h^{\npp}}{\Dt^2} ,\tphi^s\!\>_{\Omg^s}
  \!\!\!+ \<\!\mu^s\(\grad\eeta_h^{n+1}\!+\!\grad \eeta_h^{n+1,\top}\)\!+\!\lam^s\div\eeta_h^{n+1}\II,\grad\tphi^s\!\>_{\Omg^s}
  \label{solid.lin.4}
\end{align}
with $\eeta_h^n(\zz) = \vphi_h^n(\zz) -\zz$.
So, it is the familiar first order backward differentiation for linear elastodynamics.


\subsection{Fluid part} \label{S.fluid}

Now, let us move to the fluid part in \qref{onefield.1}.
One nice feature of our scheme is that we use explicit interface advancing.
This makes our scheme very efficient, provided that it will not damage
the accuracy (Table~\ref{Table.accu} and Fig~\ref{F.interface})
and the stability (Theorem~\ref{T.stab.fluid}).

So, we define $\vphi_h^n$ by \qref{vphihn} which is an explicit extrapolation.
From the values of $\vphi_h^{n}$ at the grid points of $\TT^\Gam_h$,
we determine the numerical FS interface at time $t^n$.
Moveover, from those values and also the information from fixed boundaries $\Sigma_1$ and $\Sigma_2$,
we are able to determine all the
boundary grid points of $\TT^f_{h,t^n}$. In particular,
we know the position of all boundary {\em vertices} of
$\TT^f_{h,t^n}$. Using the latter as boundary value, we use $\PP_1$
(not $\PP_m$!) element to solve a linear {\em anisotropic}
elasticity equation on $\TT^f_{h,0}$. We use $\mu=\lam=1+\frac{\max_i |T_i|-\min_i|T_i|}{|T_j|}$ for each triangle
$T_j\in \TT^f_{h,0}$. The idea of increasing the stiffness of small
elements to prevent them from being distorted comes from \cite{Ma}.
The result gives positions of {\em vertices} of all the triangles of
$\TT^f_{h,t^n}$. We always used $\TT^f_{h,0}$ to construct
$\TT^f_{h,t^n}$ so that we do not have to reconstruct the stiffness matrix in each time step.
Certainly more sophisticated method can be used, but we shall not discuss those alternatives.

Recall that we use $\PP_m/\PP_{m-1}/\PP_m$ elements. To guarantee optimal rate of
approximation on an isoparametric finite element mesh,
all its interior triangles should be straight and are standard Lagrange elements.
But specific placement of interior grid points on curved triangles
is mandatory \cite{Sc,Ci,Le}.
So, once we have all the vertices and boundary grid points of $\TT^f_{h,t^n}$,
we can determine all the grid points except $G_{\text{ic}}$ which
denotes grid points lying {\em inside} a {\em curved} triangle touching the boundary.
When $m=2$, we are done as $G_{\text{ic}} = \emptyset$.
When $m\ge 3$, we need \cite{Sc,Le}.
We use Scott's procedure \cite{Sc} when $d=2$. Consequently, $G_{\text{ic}}$
is uniquely determined by grid points on the boundary edges (due to the mapping
$(\hat{\lam}_1,\hat{\lam}_2,\hat{\lam}_3)\mapsto (1-\hat{\lam}_2,\hat{\lam}_2,0)$
and the prefactor $\frac{\hat{\lam}_1}{1-\hat{\lam}_2}$ in \cite[(14)]{Le}).
When $d\ge 3$, we need to construct some local chart $\phi$ before we can
use \cite[(22)]{Le} to determine $G_{\text{ic}}$.


\subsubsection{The mapping $\Phi^n$ in the ALE description} \label{S.Phi}
To construct $\Phi^n$, the basic idea is to use the fact that any physical triangle $T$,
no matter which time level it is at, is mapped to {\em the same reference triangle} $\hat{T}\subset \RR^{d}$ with vertices
$\{(0,0,...,0)$,$(1,0,...,0)$,...,\\$(0,...,0,1)\}$.
We use $\Psi^{f,n}_{j}: \hat{T}\to T^{f,n}_j$ to denote this mapping
where $T^{f,n}_j$ denotes the $j$th triangle of $\TT^f_{h,t^n}$.
Then $\Phi^n(\cdot,t)$ is defined piecewisely on each triangle
$T^{f,n}_{j}\in \TT^f_{h,t^n}$ as follows:
\begin{equation} \label{Phit}
 \Phi^n(\xx,t)\Big|_{T^{f,n}_{j}}=\frac{t-t^{n-1}}{\Dt} \xx + \frac{t^{n}-t}{\Dt} \Psi^{f,n-1}_j\circ\(\Psi^{f,n}_{j}\)^{-1}(\xx).
\end{equation}
Here $t\in[t^{n-1},t^n]$ and $\Psi^{f,n-1}_{j}\circ(\Psi^{f,n}_{j})^{-1}$ maps $T^{f,n}_{j}$ to $T^{f,n-1}_j$.
Note that $\Phi^n(\xx,t^n)=\xx$ and $\Phi^n(\cdot,t^{n-1})$ maps $\Omg^f_{h,t^n}$ to $\Omg^f_{h,t^{n-1}}$.
The explicit formula of $\Psi^{f,n}_j$ is well-known \cite{Ci}:
\begin{equation} \label{Psi}
  \xx = \Psi^{f,n}_j(\hat{\xx}) = \sum_{p=0}^{L-1} \hat{\phi}_p(\hat{\xx}) \aa^{n}_{i_{(j,p)}}.
\end{equation}
Here $L$ is the number of grid points on each triangle.
$\hat{\phi}_p$ is the scalar finite element basis function on $\hat{T}$.
$p$ is the local index.
$\aa^n_{i}$ are the $i$th grid point of $\TT^f_{h,t^n}$.
$i$ is the global index. $i_{(j,p)}$ is the mapping from
local index $p$ to global index $i$ on the $j$th triangle.
Using these notations, we have the following result
\begin{lemma} \label{L.fe.phi}
The $\Phi^n(\xx,t)$ defined by \qref{Phit} satisfies
\begin{equation} \label{Phit.2}
  \Phi^n(\xx,t)=\sum_{i=1}^G \phi^{f,n}_i(\xx) \(\frac{t-t^{n-1}}{\Dt} \aa_i^{n} + \frac{t^n-t}{\Dt} \aa_i^{n-1}\)
\end{equation}
where
$\phi^{f,n}_i$ is the scalar finite element basis function on $\TT^f_{h,t^n}$
that is associated with the $i$th grid point and
$G$ is the total number of grid points.
Consequently, $\Phi^n(\xx,t)$
and
\begin{equation}
  \ww^{n}(\xx)=\pa_t\Phi^n(\xx,t).  \label{wwnt}
\end{equation}
are in the finite element space on $\TT^f_{h,t^n}$.
\end{lemma}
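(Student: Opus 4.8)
The plan is to verify \qref{Phit.2} element by element and then invoke the standard isoparametric identification of local shape functions with global basis functions. First I would fix a triangle $T^{f,n}_j\in\TT^f_{h,t^n}$ and a point $\xx\in T^{f,n}_j$, and set $\hat\xx=\(\Psi^{f,n}_j\)^{-1}(\xx)\in\hat T$; the two summands of \qref{Phit} are then expanded separately. For the first summand, the definition \qref{Psi} of the isoparametric map applied to $\hat\xx$ gives $\xx=\Psi^{f,n}_j(\hat\xx)=\sum_{p=0}^{L-1}\hat\phi_p(\hat\xx)\,\aa^n_{i_{(j,p)}}$. For the second summand, $\Psi^{f,n-1}_j\circ\(\Psi^{f,n}_j\)^{-1}(\xx)=\Psi^{f,n-1}_j(\hat\xx)$, and \qref{Psi} at the previous time level gives $\Psi^{f,n-1}_j(\hat\xx)=\sum_{p=0}^{L-1}\hat\phi_p(\hat\xx)\,\aa^{n-1}_{i_{(j,p)}}$.

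Substituting both expansions into \qref{Phit} and collecting the common factor $\hat\phi_p(\hat\xx)$, I obtain, for $\xx\in T^{f,n}_j$,
\begin{equation}
 \Phi^n(\xx,t)=\sum_{p=0}^{L-1}\hat\phi_p\big(\(\Psi^{f,n}_j\)^{-1}(\xx)\big)\(\frac{t-t^{n-1}}{\Dt}\aa^n_{i_{(j,p)}}+\frac{t^n-t}{\Dt}\aa^{n-1}_{i_{(j,p)}}\).
\end{equation}
The final step is the identification: by construction of the isoparametric finite element space, the global basis function $\phi^{f,n}_i$ restricted to $T^{f,n}_j$ equals $\hat\phi_p\circ\(\Psi^{f,n}_j\)^{-1}$ precisely when $i=i_{(j,p)}$, and vanishes on $T^{f,n}_j$ for every global index $i$ not attached to a local node of $T^{f,n}_j$. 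Hence the local sum above is the restriction to $T^{f,n}_j$ of the single global function $\sum_{i=1}^G\phi^{f,n}_i(\xx)\(\frac{t-t^{n-1}}{\Dt}\aa^n_i+\frac{t^n-t}{\Dt}\aa^{n-1}_i\)$. Since this holds on every triangle and the latter expression is single-valued and continuous across interelement boundaries, \qref{Phit.2} follows.

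I expect the only point needing care to be the legitimacy of using the same reference shape functions $\hat\phi_p$ and the same local-to-global incidence map $i_{(j,p)}$ at both time levels in the two expansions above, as this is exactly what lets the two summands combine node by node. It is justified because $\TT^f_{h,t^n}$ and $\TT^f_{h,t^{n-1}}$ are built from the common reference mesh $\TT^f_{h,0}$ with identical connectivity: the same total number $G$ of grid points, the same triangulation topology, and the same incidence map $i_{(j,p)}$; only the nodal positions $\aa^n_i$ versus $\aa^{n-1}_i$ differ. Consequently $\Psi^{f,n}_j$ and $\Psi^{f,n-1}_j$ differ only through their nodal data, and $\Psi^{f,n-1}_j\circ\(\Psi^{f,n}_j\)^{-1}$ is precisely the map sending each node $\aa^n_{i_{(j,p)}}$ to $\aa^{n-1}_{i_{(j,p)}}$.

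The ``consequently'' assertion is then immediate. For each fixed $t$ the coefficients $c_i(t)=\frac{t-t^{n-1}}{\Dt}\aa^n_i+\frac{t^n-t}{\Dt}\aa^{n-1}_i$ are constant vectors, so $\Phi^n(\cdot,t)=\sum_{i=1}^G\phi^{f,n}_i\,c_i(t)$ lies in the vector-valued finite element space on $\TT^f_{h,t^n}$. Differentiating \qref{Phit.2} in $t$ and using \qref{wwnt} yields $\ww^n(\xx)=\sum_{i=1}^G\phi^{f,n}_i(\xx)\,\frac{\aa^n_i-\aa^{n-1}_i}{\Dt}$, a time-independent member of the same space.
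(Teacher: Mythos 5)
Your proposal is correct and follows essentially the same route as the paper's own proof: expand both summands of \qref{Phit} in the reference shape functions $\hat\phi_p$ via \qref{Psi}, identify $\hat\phi_p\circ\(\Psi^{f,n}_j\)^{-1}$ with the global basis functions $\phi^{f,n}_{i_{(j,p)}}$ on each triangle, and combine node by node. The extra points you flag (the shared local-to-global incidence map across time levels and continuity across interelement boundaries) are left implicit in the paper but are exactly the right justifications.
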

\begin{proof}
Because of the way how the basis function $\phi^{f,n}_i$ is defined by $\hat{\phi}_p$,
for any $\xx\in T^{f,n}_{j}\in \TT^f_{h,t^n}$, $\xx=\sum_{p=0}^{L-1} \hat{\phi}_p(\hat{\xx})\aa_{i_{(j,p)}}^{n}
=\sum_{p=0}^{L-1} \phi^{f,n}_{i_{(j,p)}}(\xx)\aa_{i_{(j,p)}}^{n}$.
Moreover, for this $\xx$,
$\Psi^{f,n-1}_j\circ\(\Psi^{f,n}_{j}\)^{-1}\!(\xx)\! = \!\Psi^{f,n-1}_j \( \hat{\xx} \) =
\sum_{p=0}^{L-1} \hat{\phi}_p(\hat{\xx})\aa_{i_{(j,p)}}^{n-1}
=\sum_{p=0}^{L-1} \phi^{f,n}_{i_{(j,p)}}(\xx) \aa_{i_{(j,p)}}^{n-1}$.
Plugging the two previous equations into \qref{Phit}, we obtain \qref{Phit.2}.
\end{proof}

\subsubsection{Intermediate fluid meshes} \label{S.interMesh}

In the conservative ALE scheme, we need intermediate fluid domains that lie between $\Omg^f_{h,t^{n-1}}$
and $\Omg^f_{h,t^n}$. Their constructions make use of the $\Phi^n$ defined in \qref{Phit}.
For $t\in[t^{n-1},t^n]$, define
\begin{equation} \label{Omgt}
 \Omg^f_{h,t}:=\Phi^n(\Omg^f_{h,t^n},t).
\end{equation}
Note that $\Phi^n(\cdot,t)$ maps all the grid points of $\TT^f_{h,t^n}$ to $\Omg^f_{h,t}$
which then form a mesh. We called this mesh $\TT^f_{h,t}$. As $\Psi^{f,n}_j$ is reduced to
affine linear mapping when $T^{f,n}_{j}$ is a straight triangle, it is easy to see that all the interior
triangles of $\TT^f_{h,t}$ are straight. 

\subsubsection{Backward in time extension}
Recall that on $\TT^f_{h,t^n}$, we use $\phi^{f,n}_i(\xx)$ to denote
the scalar basis function associated with the $i$th grid point.
Define its backward in time extension
\begin{equation} \label{phi-st}
  \vecphi_i^{f,n}(\yy,t)=\phi_i^{f,n}([\Phi^n(\cdot,t)^{-1}](\yy))
\end{equation}
for $(\yy,t)\in Q_{[t^{n-1},t^n]}=\{(\yy,t),\yy\in\Omg^f_{h,t},t\in[t^{n-1},t^n]\}$.
An immediate consequence is that
$\vecphi_i^{f,n}(\Phi^n(\xx,t),t)=\phi_i^{f,n}(\xx)$ for any $\xx\in \Omg^f_{h,t^n}$.
Hence
\begin{equation} \label{invariant}
  \frac{d}{dt}\vecphi_i^{f,n}(\Phi^n(\xx,t),t)=0.
\end{equation}

\subsubsection{Related properties}
As $\TT^f_{h,t}$ is
a finite element mesh by itself, for any triangle $T^{f,t}_j\in\TT^f_{h,t}$, automatically there is a mapping $\Psi^{f,t}_j$ that maps
$\hat{T}$ to $T^{f,t}_j$. Like \qref{Psi}, this $\Psi^{f,t}_j$ is given by
$\yy = \Psi^{f,t}_j(\hat{\xx})=\sum_{p=0}^{L-1} \hat{\phi}_p(\hat{\xx}) \aa^t_{i_{(j,p)}}$
where $\aa^t_{i_{(j,p)}}$ is the $p$th grid point of $T^{f,t}_j$ and its global index is $i_{(j,p)}$. Because of \qref{Phit.2}
and the way we construct $\TT^f_{h,t}$, we know
$\aa^t_{i_{(j,p)}} =\Phi^n(\aa^{n}_{i_{(j,p)}},t)= \frac{t-t^{n-1}}{\Dt} \aa^{n}_{i_{(j,p)}} + \frac{t^n-t}{\Dt} \aa^{n-1}_{i_{(j,p)}}$.
On the other hand, the mapping $\Phi^n(\cdot,t)\circ\Psi^{f,n}_{j}$ also maps $\hat{T}$ to $T^{f,t}_j$.  By \qref{Phit}, we have
\begin{align}
  \yy&=[\Phi^n(\cdot,t)\circ\Psi^{f,n}_{j}](\hat{\xx})=\frac{t-t^{n-1}}{\Dt} \Psi^{f,n}_{j}(\hat{\xx})
           + \frac{t^{n}-t}{\Dt} \Psi^{f,n-1}_j(\hat{\xx}) \nonumber \\
  & = \sum_{p=0}^{L-1} \hat{\phi}_p(\hat{\xx}) \( \frac{t-t^{n-1}}{\Dt} \aa^{n}_{i_{(j,p)}} + \frac{t^n-t}{\Dt} \aa^{n-1}_{i_{(j,p)}} \)
    = \sum_{p=0}^{L-1} \hat{\phi}_p(\hat{\xx}) \aa^t_{i_{(j,p)}}. \label{Pit.2}
\end{align}
So we see that $\Phi^n(\cdot,t)\circ\Psi^{f,n}_{j}$ is exactly
$\Psi^{f,t}_j$.
As a result of \qref{Pit.2}, we get $[\Phi^n(\cdot,t)^{-1}](\yy)
=\Psi^{f,n}_{j}(\hat{\xx})$
if $\yy=\Psi^{f,t}_j(\hat{\xx})$.
Consequently, we have the following nice property for numerical implementation.
\begin{proposition} \label{P.basis.extend}
The backward in time extension $\vecphi_i^{f,n}(\yy,t)$ which is defined by \qref{phi-st} is nothing but the standard basis
function associated with the $i$th grid point on $\TT^f_{h,t}$ for any $t\in[t^{n-1},t^n]$.
\end{proposition}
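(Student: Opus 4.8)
The plan is to verify the identity triangle by triangle, pulling everything back to the reference element $\hat{T}$, since all the hard geometric work has already been done in deriving \qref{Pit.2}. First I would fix $t\in[t^{n-1},t^n]$ and an arbitrary triangle $T^{f,t}_j\in\TT^f_{h,t}$, and parametrize a generic point by $\yy=\Psi^{f,t}_j(\hat{\xx})$ with $\hat{\xx}\in\hat{T}$. The single fact I would lean on is the consequence of \qref{Pit.2} noted just above the statement, namely that $[\Phi^n(\cdot,t)^{-1}](\yy)=\Psi^{f,n}_j(\hat{\xx})$ whenever $\yy=\Psi^{f,t}_j(\hat{\xx})$; this says that the inverse ALE map, restricted to $T^{f,t}_j$, is exactly the reference parametrization of the corresponding triangle $T^{f,n}_j$ at level $t^n$.

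Substituting this into the definition \qref{phi-st} of the backward extension gives $\vecphi_i^{f,n}(\yy,t)=\phi_i^{f,n}\big(\Psi^{f,n}_j(\hat{\xx})\big)$. Now I would invoke the defining relation of a finite element basis function through the reference element (the same relation used in \qref{Psi}): if the global index $i$ corresponds to the local index $p$ on the $j$th triangle, i.e. $i=i_{(j,p)}$, then $\phi_i^{f,n}\big(\Psi^{f,n}_j(\hat{\xx})\big)=\hat{\phi}_p(\hat{\xx})$, and $\phi_i^{f,n}$ vanishes on $T^{f,n}_j$ when $i$ is not a grid point of that triangle. Denoting by $\phi_i^{f,t}$ the standard basis function on $\TT^f_{h,t}$ associated with the $i$th grid point, the same reference relation reads $\phi_i^{f,t}\big(\Psi^{f,t}_j(\hat{\xx})\big)=\hat{\phi}_p(\hat{\xx})$, that is $\phi_i^{f,t}(\yy)=\hat{\phi}_p(\hat{\xx})$. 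Comparing the two displays yields $\vecphi_i^{f,n}(\yy,t)=\phi_i^{f,t}(\yy)$ on $T^{f,t}_j$.

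Since $j$ and $\yy$ were arbitrary, the two functions agree on every triangle of $\TT^f_{h,t}$, hence on all of $\Omg^f_{h,t}$. I expect no real obstacle here: the geometric identity \qref{Pit.2} is doing all the work, and what remains is merely unwinding definitions on the reference triangle. The one point I would be careful about is well-definedness across triangles. A grid point shared by several triangles carries a different local index $p$ in each, but the computation above holds on each of them, and both $\vecphi_i^{f,n}(\cdot,t)$ and $\phi_i^{f,t}$ are globally continuous --- the former because it is the composition of the continuous conforming basis function $\phi_i^{f,n}$ with the continuous inverse map $\Phi^n(\cdot,t)^{-1}$, which is a bijection onto $\Omg^f_{h,t}$ under the collision-free assumption \qref{stab.assum.1}, and the latter by construction --- so the piecewise agreement upgrades to global equality.
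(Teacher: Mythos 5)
Your proposal is correct and follows essentially the same route as the paper's own proof: both rest on the identity $[\Phi^n(\cdot,t)^{-1}](\yy)=\Psi^{f,n}_{j}(\hat{\xx})$ deduced from \qref{Pit.2}, substitute it into the definition \qref{phi-st}, and unwind the reference-element characterization of the basis functions to conclude $\vecphi_i^{f,n}(\yy,t)=\hat{\phi}_p(\hat{\xx})=\phi_i^{f,t}(\yy)$. Your additional remarks on the vanishing of $\phi_i^{f,n}$ on triangles not containing the $i$th grid point and on continuity across inter-element boundaries merely make explicit details the paper leaves implicit.
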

\begin{proof}
Because $[\Phi^n(\cdot,t)^{-1}](\yy)=\Psi^{f,n}_{j}(\hat{\xx})$ for any $\yy\in \Omg_{h,t}^f$ satisfying $\yy=\Psi^{f,t}_j(\hat{\xx})$,
\begin{equation} \label{Pit.3}
  \vecphi_i^{f,n}(\yy,t)=\phi_i^{f,n}([\Phi^n(\cdot,t)^{-1}](\yy))
 =\phi_i^{f,n}(\Psi^{f,n}_{j}(\hat{\xx}))=\hat{\phi}_p(\hat{\xx})
\end{equation}
for some basis function $\hat{\phi}_p$ on $\hat{T}$. The first equality in \qref{Pit.3}
is by the definition \qref{phi-st}.
The last equality in \qref{Pit.3} is by the definition of $\phi_i^{f,n}$. Then as the
$\hat{\xx}$ and $\yy$ in \qref{Pit.3} are linked by $\Psi^{f,t}_j$, \qref{Pit.3} says
$\vecphi_i^{f,n}(\cdot,t)$ is the standard basis function on $\TT^f_{h,t}$.
\end{proof}

If $\ff$ is in the finite element space on $\TT^f_{h,t^n}$,
it has an expansion $\ff=\sum_{i=1}^G \ff_i \phi^{f,n}_i$ with $G$ being the number of grid points on $\TT^f_{h,t^n}$.
Here, $\ff_i=(f_{i,1},...,f_{i,d})$ and the vector scalar product $\ff_i \phi^{f,n}_i=(f_{i,1}\phi^{f,n}_i,...,f_{i,d}\phi^{f,n}_i)$.
We can define the backward in time extension of $\ff$ as follows:
\begin{equation} \label{ff-st}
  \vecf(\yy,t)=\ff([\Phi^n(\cdot,t)^{-1}](\yy))=\sum_{i=1}^G \ff_i \vecphi^{f,n}_i(\yy,t).
\end{equation}


%
%

In particular, recalling that the $\ww^{n}(\xx)$ defined in \qref{wwnt} is
in the finite element space on $\Omg^f_{h,t^n}$,
we have $\ww^{n}(\xx,t)=\sum_{i=1}^{G} \ww^{n}_i \phi^{f,n}_i$.
Its backward in time extension $\vecw^{n}$ in the space-time domain $Q_{[t^{n-1},t^n]}$ is defined as
\begin{equation} \label{wn-st}
  \vecw^{n}(\yy,t)=\ww^{n}([\Phi^n(\cdot,t)^{-1}](\yy))=\sum_{i=1}^{G} \ww^{n}_i \vecphi^{f,n}_i(\yy,t).
\end{equation}

With $\vecw^{n}$ ready, we can study the relation between
$\int_{\Omg^f_{h,t^{n-k}}}\!\!\vecphi_i^{f,n}(\yy,t^{n-k})d\yy$ for $k=0$ and $k=1$.
This is the key to understand the Geometric Conservation Law.
\begin{lemma} \label{L.gcl.1}
Let $\phi^{f,n}_i$ be any function on $\Omg^f_{h,t^n}$ and let $\vecphi_i^{f,n}$
be its backward in time extension defined by \qref{phi-st}. When $\Omg^f_{h,t^n} \subset \RR^2$,
\begin{equation} \label{gcl.4}
  \int_{\Omg^f_{h,t^n}}\!\!\!\!\phi_i^{f,n}(\yy)d\yy
 \!-\!
 \int_{\Omg^f_{h,t^{\npp}}}\!\!\!\!\vecphi_i^{f,n}(\yy,t^{n-1})d\yy = \Dt
 \int_{\Omg^f_{h,t^{\np}}} \!\!\!\!\vecphi_i^{f,n}(\yy,t^{\np}) \div \vecw^{n}(\yy,t^{\np}) d\yy,
\end{equation}
where the divergence in $\div \vecw^{n}$ is taken with respect to the $\yy$ variable.
Note that the first integrand $\phi_i^{f,n}(\yy)$ equals to $\vecphi_i^{f,n}(\yy,t^n)$.
If $\Omg^f_{h,t^n} \subset \RR^3$
the right hand side of \qref{gcl.4} should be
\begin{equation} \label{gcl.gauss}
 \frac{\Dt}{2}\sum_{\ell=1}^2 \int_{\Omg^f_{h,t^{n_\ell}}} \vecphi_i^{f,n}(\yy,t^{n_\ell}) \div \vecw^{n}(\yy,t^{n_\ell}) d\yy
\end{equation}
with $t^{n_1}=t^n-(\frac12+\frac{1}{2\sqrt{3}})\Dt$ and $t^{n_2}=t^n-(\frac12-\frac{1}{2\sqrt{3}})\Dt$
being the two quadrature points of the two-point Gauss quadrature on $[t^{n-1},t^n]$.
\end{lemma}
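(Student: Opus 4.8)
The plan is to reduce the identity \qref{gcl.4} to a purely one-dimensional-in-time calculation by exploiting the fact that $\vecphi_i^{f,n}(\yy,t)$ is exactly the standard basis function on $\TT^f_{h,t}$ (Proposition~\ref{P.basis.extend}) and that the integration domain itself moves with velocity $\vecw^{n}$. First I would define the scalar function
\[
 F(t)=\int_{\Omg^f_{h,t}} \vecphi_i^{f,n}(\yy,t)\,d\yy,
\]
so that the left hand side of \qref{gcl.4} is $F(t^n)-F(t^{n-1})$. The key observation is that $\Omg^f_{h,t}=\Phi^n(\Omg^f_{h,t^n},t)$ moves with the mesh velocity $\ww^n=\pa_t\Phi^n$, whose backward-in-time extension is $\vecw^n$. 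Hence I would apply the Reynolds transport theorem \qref{Reynolds.2} with $\eta=\vecphi_i^{f,n}$ and boundary velocity $\vecw^n$, yielding
\[
 F'(t)=\int_{\Omg^f_{h,t}} \pa_t \vecphi_i^{f,n}(\yy,t)\,d\yy
        + \int_{\pa\Omg^f_{h,t}} \vecphi_i^{f,n}\,\vecw^n\cdot\nn.
\]
The invariance property \qref{invariant}, namely $\frac{d}{dt}\vecphi_i^{f,n}(\Phi^n(\xx,t),t)=0$, translates into $\pa_t\vecphi_i^{f,n}+\vecw^n\cdot\grad\vecphi_i^{f,n}=0$ on $Q_{[t^{n-1},t^n]}$, so after converting the surface integral back with the divergence theorem I expect to obtain the compact formula
\[
 F'(t)=\int_{\Omg^f_{h,t}} \vecphi_i^{f,n}(\yy,t)\,\div\vecw^n(\yy,t)\,d\yy.
\]

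With $F'(t)$ in hand, the identity \qref{gcl.4} is just $F(t^n)-F(t^{n-1})=\int_{t^{n-1}}^{t^n} F'(t)\,dt$ evaluated by a quadrature rule that is \emph{exact} for $F'$. The crucial point is determining the polynomial degree of $F'(t)$ in $t$. On each reference-pulled-back triangle, \qref{Phit.2} shows that the grid points move affinely in $t$, so the Jacobian $\det\frac{\pa\yy}{\pa\hat\xx}$ of the map $\Psi^{f,t}_j$ is a polynomial in $t$ whose degree is $d$ (the spatial dimension), since it is a $d\times d$ determinant of entries that are linear in $t$. Changing variables to the fixed reference triangle $\hat T$ turns $F(t)$ into an integral of $\hat\phi_p(\hat\xx)$ times this Jacobian, and $\div\vecw^n$ contributes the logarithmic derivative of the Jacobian; the product $\vecphi_i^{f,n}\,\div\vecw^n\,\det(\cdot)$ is therefore a polynomial in $t$ of degree $d-1$. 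In $\RR^2$ this degree is $1$, so $F'$ is linear in $t$ and the midpoint rule (evaluation at $t^{\np}=t^n-\tfrac12\Dt$, scaled by $\Dt$) integrates it exactly, giving \qref{gcl.4}. In $\RR^3$ the degree is $2$, so $F'$ is quadratic and the two-point Gauss rule on $[t^{n-1},t^n]$ at the nodes $t^{n_1},t^{n_2}$ is exact, giving \qref{gcl.gauss}.

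The main obstacle I anticipate is making the degree count rigorous and reconciling it with the possibly \emph{curved} boundary triangles. For straight interior triangles $\Psi^{f,t}_j$ is affine and the Jacobian is genuinely a degree-$d$ polynomial in $t$, so the argument is clean; but on curved elements touching the boundary the isoparametric map is of degree $m$ in $\hat\xx$, and one must check that linearity in $t$ of each nodal position $\aa^t_{i_{(j,p)}}$ (guaranteed by \qref{Phit.2}) still forces the time-dependence of $\det\frac{\pa\yy}{\pa\hat\xx}$ to be a polynomial of degree exactly $d$ in $t$, independent of the spatial degree $m$. The resolution is that each entry $\pa\yy/\pa\hat\xx$ is $\sum_p \pa_{\hat\xx}\hat\phi_p(\hat\xx)\,\aa^t_{i_{(j,p)}}$, which is linear in $t$ because the nodal values $\aa^t$ are linear in $t$ while the shape-function derivatives carry no $t$-dependence; thus the determinant remains degree $d$ in $t$ regardless of $m$. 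Once this is established, the quadrature-exactness for the midpoint rule ($d=2$) and the two-point Gauss rule ($d=3$) follows immediately, since those rules integrate polynomials of degree $1$ and $3$ respectively, comfortably covering degrees $d-1=1$ and $d-1=2$.
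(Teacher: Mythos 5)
Your proposal is correct, and its skeleton coincides with the paper's: both reduce \qref{gcl.4} to the fundamental theorem of calculus in time, both rely on the invariance \qref{invariant} of the pulled-back basis function, both identify the surviving integrand with $\vecphi_i^{f,n}\,\div\vecw^{n}$ via Jacobi's formula (your ``logarithmic derivative of the Jacobian''), and both close with the observation that the resulting function of $t$ is a polynomial of degree $d-1$, so the midpoint rule ($d=2$) or two-point Gauss rule ($d=3$) is exact. The routes through this skeleton differ, though. The paper stays entirely in Lagrangian coordinates: it changes variables to the fixed domain $\Omg^f_{h,t^n}$, lets \qref{invariant} kill the time derivative of $\vecphi_i^{f,n}$, does the degree count and quadrature directly on $\frac{d}{dt}\det(\pa\Phi^n/\pa\xx)$, and only afterwards uses Jacobi's formula and the chain rule to recognize this as $\det(\pa\Phi^n/\pa\xx)\,\div\vecw^{n}$ and change variables back; it never invokes the transport or divergence theorems (it remarks that its argument ``is essentially the proof of Reynolds transport theorem''). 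You instead work in Eulerian coordinates, citing \qref{Reynolds.2} as a black box together with the divergence theorem to obtain $F'(t)=\int_{\Omg^f_{h,t}}\vecphi_i^{f,n}\div\vecw^{n}\,d\yy$ first, and only then pull back to the reference triangle $\hat{T}$ for the degree count. Your organization buys two things the paper does not spell out: the explicit verification that the degree-$d$-in-$t$ claim for the element Jacobian is independent of the isoparametric degree $m$ on curved boundary elements, and a cleaner separation between the transport identity and the quadrature-exactness argument. One caveat you should add for full rigor (it applies equally, and implicitly, to the paper's change-of-variables step): since $\grad\vecphi_i^{f,n}$ and $\grad\vecw^{n}$ jump across element faces, both \qref{Reynolds.2} and the divergence theorem must be applied element by element on the moving mesh $\TT^f_{h,t}$, with the interior face contributions cancelling because $\vecphi_i^{f,n}\vecw^{n}$ is continuous; this costs one sentence and removes the only soft spot in your argument.
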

\begin{proof}
The following argument is essentially the proof of Reynolds transport theorem \qref{Reynolds.2} (see \cite[p.487]{An}).
To simplify the notation, we write $\vecphi^{f,n}_i$ as $\vecphi$.
\begin{align*}
 & \int_{\Omg^f_{h,t^n}}\!\!\vecphi(\yy,t^n)d\yy-
 \int_{\Omg^f_{h,t^{n\!-\!1}}}\!\!\vecphi(\yy,t^{n-1})d\yy
=\int_{t^{n-1}}^{t^n}\( \frac{d}{dt} \int_{\Omg^f_{h,t}} \vecphi(\yy,t)d\yy \)dt \\
= & \int_{t^{n-1}}^{t^n}\( \frac{d}{dt} \int_{\Omg^f_{h,t^n}} \vecphi(\Phi^n(\xx,t),t) \left|\frac{\pa \Phi^n(\xx,t)}{\pa \xx}\right| d\xx \)dt \\
= & \int_{t^{n-1}}^{t^n}\( \int_{\Omg^f_{h,t^n}} \vecphi(\Phi^n(\xx,t),t) \frac{d}{dt} \left|\frac{\pa \Phi^n(\xx,t)}{\pa \xx}\right| d\xx \)dt.
\end{align*}
In the second step, we have changed variables and in the last step we have used \qref{invariant}.
Note that as $\vecphi(\Phi^n(\xx,t),t)$ is independent of $t$,
the integrand in the last expression is a polynomial of degree $d-1$ in $t$ where $d$ is the spatial dimension.
To integrate it exactly, when $d=2$, we can use the mid-point rule and when $d=3$ we can use Gauss quadrature.
Take $d=2$ as an example: The right hand side of the above equation equals
\begin{align}
  \Dt \left.\int_{\Omg^f_{h,t^n}} \!\!\vecphi(\Phi^n(\xx,t),t)
  \frac{\pa}{\pa t} \left|\frac{\pa \Phi^n(\xx,t)}{\pa \xx}\right| d\xx \right|_{t=t^{\np}}.  \label{gcl.6}
\end{align}
Now, let $\GG(\xx,t)=\frac{\pa \Phi^n(\xx,t)}{\pa\xx}$ and recall $\ww^{n}(\xx)=\pa_t\Phi^n(\xx,t)$.
\begin{align}
   \frac{\pa}{\pa t}\det \GG(\xx,t) &= \det\GG(\xx,t) \mbox{tr}\(\pa_t\GG(\xx,t) \GG(\xx,t)^{-1}\) \nonumber \\
 &= \det\GG(\xx,t) \mbox{tr}\( \frac{\pa \ww^{n}(\xx)}{\pa\xx} \GG(\xx,t)^{-1} \). \label{gcl.7}
\end{align}
Then, because of \qref{wn-st}, $\vecw^{n}(\Phi^n(\xx,t),t)=\ww^{n}(\xx)$.
Hence
\[
  \frac{\pa \ww^{n}(\xx)}{\pa\xx} = \frac{\pa \vecw^{n}(\Phi^n(\xx,t),t)}{\pa\xx}
  =\left.\frac{\pa \vecw^{n}(\yy,t)}{\pa\yy}\right|_{\yy=\Phi^n(\xx,t)} \frac{\pa \Phi^n(\xx,t)}{\pa \xx}.
\]
So we can continue \qref{gcl.7} and obtain
\begin{align*}
   \det\GG(\xx,t) \mbox{tr}\( \frac{\pa \ww^{n}(\xx)}{\pa\xx} \GG(\xx,t)^{-1} \)
 = \det\GG(\xx,t) \tr \(\frac{\pa \vecw^{n}(\yy,t)}{\pa\yy}\Big|_{\yy=\Phi^n(\xx,t)}\).
\end{align*}
Putting all together, we have
\begin{align*}
 & \int_{\Omg^f_{h,t^n}}\!\!\vecphi(\yy,t^n)d\yy-
   \int_{\Omg^f_{h,t^{n\!-\!1}}}\!\!\vecphi(\yy,t^{n-1})d\yy \\
 = & \Dt \left.\int_{\Omg^f_{h,t^n}} \!\!\vecphi(\Phi^n(\xx,t),t)
   \det\(\frac{\pa \Phi^n(\xx,t)}{\pa\xx}\) \; \tr \(\frac{\pa \vecw^{n}(\yy,t)}{\pa\yy}\Big|_{\yy=\Phi^n(\xx,t)}\) d\xx \right|_{t=t^{\np}}.
\end{align*}
After a change of variable $\yy=\Phi^n(\xx,t^\np)$ and using \qref{Omgt}, we obtain \qref{gcl.4}.
\end{proof}


Because in the above proof we only use $\frac{d}{dt}\vecphi_i^{f,n}(\Phi^n(\xx,t),t)=0$,
the $\vecphi_i^{f,n}$ in \qref{gcl.4} can be changed to $\vecphi_i^{f,n} \vecphi_j^{f,n}$. Therefore, we have the following Corollary:
\begin{corollary} \label{L.int.unun}
When $\Omg^f_{h,t^n}\subset\RR^2$, we have
\begin{equation} \label{gcl.5}
  \int_{\Omg^f_{h,t^n}}\!\!|\uu_h^n(\xx)|^2d\xx-
 \int_{\Omg^f_{h,t^{\npp}}}\!\!\! |\vecu_h^n(\yy,t^{\npp})|^2d\yy = \Dt
 \int_{\Omg^f_{h,t^{\np}}} \!\!\! |\vecu_h^n(\yy,t^\np)|^2 \div \vecw^{n}(\yy,t^\np) d\yy
\end{equation}
where $\uu_h^n$ is any function defined on $\Omg^f_{h,t^n}$ and $\vecu_h^n$ is
its backward in time extension $(\ref{ff-st}a)$. Using Gauss quadrature in time, we have similar
formula when $\Omg^f_{h,t^n}\subset\RR^3$.
\end{corollary}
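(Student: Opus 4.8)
The plan is to re-run the proof of Lemma~\ref{L.gcl.1} essentially verbatim, with the scalar extension $\vecphi_i^{f,n}$ replaced by the scalar $|\vecu_h^n|^2$. The single fact that drove that proof was the material invariance \qref{invariant}, namely $\frac{d}{dt}\vecphi_i^{f,n}(\Phi^n(\xx,t),t)=0$, so the first thing I would verify is the analogous invariance for $|\vecu_h^n|^2$. By the definition of the backward in time extension, $\vecu_h^n(\yy,t)=\uu_h^n([\Phi^n(\cdot,t)^{-1}](\yy))$, so that for every $\xx\in\Omg^f_{h,t^n}$ one has $\vecu_h^n(\Phi^n(\xx,t),t)=\uu_h^n(\xx)$, which is independent of $t$. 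Hence $|\vecu_h^n(\Phi^n(\xx,t),t)|^2=|\uu_h^n(\xx)|^2$ is also $t$-independent, i.e. $\frac{d}{dt}|\vecu_h^n(\Phi^n(\xx,t),t)|^2=0$. (Equivalently, when $\uu_h^n$ lies in the finite element space, one may expand $|\uu_h^n|^2=\sum_{i,j}(\uu_{h,i}^n\cdot\uu_{h,j}^n)\,\vecphi_i^{f,n}\vecphi_j^{f,n}$ and use that each product $\vecphi_i^{f,n}\vecphi_j^{f,n}$ satisfies \qref{invariant}, exactly as remarked just above the statement.)

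With this in hand, I would follow the steps of Lemma~\ref{L.gcl.1}: write the left-hand side of \qref{gcl.5} as $\int_{t^{n-1}}^{t^n}\frac{d}{dt}\int_{\Omg^f_{h,t}}|\vecu_h^n(\yy,t)|^2\,d\yy\,dt$, change variables $\yy=\Phi^n(\xx,t)$ to pull the integral back onto the fixed reference domain $\Omg^f_{h,t^n}$ (picking up the Jacobian $\det(\pa\Phi^n/\pa\xx)$), and use the invariance just established so that the time derivative acts only on the Jacobian. Because the quadratic factor has been frozen to the $t$-independent value $|\uu_h^n(\xx)|^2$, the resulting integrand $|\uu_h^n(\xx)|^2\,\frac{d}{dt}\det(\pa\Phi^n/\pa\xx)$ is, exactly as before, a polynomial of degree $d-1$ in $t$ (since $\Phi^n$ is affine in $t$ by \qref{Phit.2}, its Jacobian determinant has degree $d$ and the $t$-derivative degree $d-1$). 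Therefore the midpoint rule ($d=2$) or the two-point Gauss rule ($d=3$) evaluates the $t$-integral exactly, at $t^\np$ (respectively at $t^{n_1},t^{n_2}$).

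Finally I would insert the determinant-derivative identity \qref{gcl.7}, which rewrites $\frac{d}{dt}\det(\pa\Phi^n/\pa\xx)$ as $\det(\pa\Phi^n/\pa\xx)\,\tr(\pa\vecw^n/\pa\yy)$ and hence brings in $\div\vecw^n$, and then change variables back to $\Omg^f_{h,t^\np}$ via \qref{Omgt}, producing the right-hand side of \qref{gcl.5}. No new estimate is required. The only point demanding care is the degree bookkeeping: one must confirm that the integrand stays of degree $d-1$ in $t$ even though $|\vecu_h^n|^2$ is quadratic, and this is precisely guaranteed by the freezing $|\vecu_h^n(\Phi^n(\xx,t),t)|^2=|\uu_h^n(\xx)|^2$ after the pullback. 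That bookkeeping is the entire substance of the argument, so there is no genuine obstacle beyond it.
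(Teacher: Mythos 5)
Your proposal is correct and is essentially the paper's own argument: the paper proves the corollary by observing that the proof of Lemma~\ref{L.gcl.1} uses only the invariance property \qref{invariant}, so the basis function may be replaced by products $\vecphi_i^{f,n}\vecphi_j^{f,n}$ (and hence by $|\vecu_h^n|^2$), which is exactly the re-run you carry out, with your parenthetical expansion matching the paper's phrasing verbatim. No discrepancy to report.
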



Now we study the relation between the mesh velocity $\ww^{n}$ and fluid velocity $\uu_h^{n-1}$.
Note that $\ww^{n}$ is defined on $\Omg^f_{h,t^{n}}$ while
$\uu_h^{n-1}=\sum_{i=1}^G \uu_{h,i}^{n-1}\phi^{f,n-1}_i$ is defined
on $\Omg^f_{h,t^{n-1}}$.
But by Proposition~\ref{P.basis.extend}, $\uu_h^{n-1}=\sum_{i=1}^G \uu_{h,i}^{n-1}\vecphi^{f,n}_i(\cdot,t^{n-1})$.
So, we can introduce the forward in time extension defined in the space-time domain $Q_{[t^{n-1},t^n]}$:
\begin{equation} \label{vecu}
  \lvecu_h^{n-1}(\yy,t)=\sum_{i=1}^G \uu_{h,i}^{n-1}\vecphi^{f,n}_i(\yy,t).
\end{equation}

\begin{lemma} \label{L.wu}
Note that $\pa\Omg^f_{h,t}\backslash(\Sigma_1\cup\Sigma_2)$ is the FS interface.
For any $t\in[t^{n-1},t^n]$,
\begin{equation} \label{w=u}
  \vecw^{n}(\cdot,t)=\lvecu_h^{n-1}(\cdot,t)  \qquad \mbox{ on } \quad \pa\Omg^f_{h,t}\backslash(\Sigma_1\cup\Sigma_2).
\end{equation}
\end{lemma}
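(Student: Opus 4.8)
The plan is to observe that, by Proposition~\ref{P.basis.extend}, the extended basis functions $\vecphi_i^{f,n}(\cdot,t)$ are nothing but the standard nodal basis on the intermediate mesh $\TT^f_{h,t}$. Therefore, by the defining expansions \qref{wn-st} and \qref{vecu}, both $\vecw^n(\cdot,t)$ and $\lvecu_h^{n-1}(\cdot,t)$ are finite element functions on the \emph{same} mesh $\TT^f_{h,t}$, written in one and the same basis, with the $t$-independent nodal coefficients $\ww^n_i$ and $\uu_{h,i}^{n-1}$, respectively. Because the trace of a Lagrange (isoparametric) finite element function on a boundary face of $\pa\Omg^f_{h,t}$ is determined solely by its nodal values at the grid points lying on that face---the basis functions attached to interior nodes, including the $G_{\text{ic}}$ points, restrict to zero there---it suffices to prove the nodal identity $\ww^n_i=\uu_{h,i}^{n-1}$ at every grid point $i$ lying on the FS interface $\pa\Omg^f_{h,t}\backslash(\Sigma_1\cup\Sigma_2)$.

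First I would read off the mesh-velocity coefficient: differentiating the explicit formula \qref{Phit.2} in $t$ and comparing with \qref{wwnt} gives $\ww^n_i=(\aa_i^n-\aa_i^{n-1})/\Dt$. For an interface node the fluid mesh is generated so that $\aa_i^n$ and $\aa_i^{n-1}$ are the images under $\vphi_h^n$ and $\vphi_h^{n-1}$ of the corresponding solid boundary node $\zz_i\in\Gam$ (this is exactly how $\TT^f_{h,t^n}$ is built from the values of $\vphi_h^n$ on $\TT^\Gam_h$). The explicit extrapolation \qref{vphihn} then yields
\[
  \ww^n_i=\frac{\vphi_h^n(\zz_i)-\vphi_h^{n-1}(\zz_i)}{\Dt}=\vv_h^{n-1}(\zz_i).
\]
Next I would identify the fluid-velocity coefficient $\uu_{h,i}^{n-1}=\uu_h^{n-1}(\aa_i^{n-1})$. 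Since $(\uu_h^{n-1};p_h^{n-1};\vv_h^{n-1})\in V^{\vphi_h^{n-1}}_{(t^{n-1})}$, the membership constraint $\uu_h^{n-1}(\vphi_h^{n-1}(\zz))=\vv_h^{n-1}(\zz)$ holds at every interface node, so evaluating at $\zz_i$ gives $\uu_{h,i}^{n-1}=\vv_h^{n-1}(\zz_i)$. Comparing the two computations, $\ww^n_i=\vv_h^{n-1}(\zz_i)=\uu_{h,i}^{n-1}$ at every interface node, whence the two finite element functions agree on $\pa\Omg^f_{h,t}\backslash(\Sigma_1\cup\Sigma_2)$ for all $t\in[t^{n-1},t^n]$, which is \qref{w=u}. (On the fixed boundaries the coefficients instead satisfy $\ww^n_i=0$, while $\uu_{h,i}^{n-1}$ need not vanish, e.g.\ under inflow $\uu_b\neq0$; this is why the identity is claimed only on the interface.)

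The step I expect to require the most care is the reduction in the first paragraph. One must make the trace statement precise on a curved isoparametric element abutting the interface: writing the element as $\Psi^{f,t}_j(\hat T)$, the restriction of a finite element function to the boundary face corresponds to the restriction of a fixed-degree polynomial to a face of $\hat T$, which by unisolvence is fixed by the face nodes alone, so the interior degrees of freedom (the $G_{\text{ic}}$ points) cannot perturb the boundary values. The remaining bookkeeping---that the fluid interface nodes are genuinely the $\vphi_h^n$-images of the solid boundary nodes $\TT^\Gam_h$, so that the fluid and solid meshes share their interface nodes and the two coefficient computations refer to the same $\zz_i$---is routine but must be tracked, including at the higher-order edge and face nodes when $m\ge2$.
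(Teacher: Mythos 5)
Your proposal is correct and follows essentially the same route as the paper's proof: reduce \qref{w=u} to the nodal identity $\ww^n_i=\uu_{h,i}^{n-1}$ at interface grid points, then combine the mesh construction $\aa_i^n=\vphi_h^n(\zz_i)=\aa_i^{n-1}+\Dt\vv_h^{n-1}(\zz_i)$ with the kinematic constraint in \qref{Vh} that fluid and solid velocities agree at interface nodes. The extra care you take with the trace argument (that interior degrees of freedom cannot affect boundary values) is a point the paper leaves implicit, but it does not change the substance of the argument.
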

\begin{proof}
From \qref{Phit.2} and \qref{wwnt}, we know $\ww^n(\xx)=\sum_{i=1}^G \phi_i^{f,n}(\xx)\frac{\aa_i^n-\aa_i^{n-1}}{\Dt}$.
Hence $\vecw^{n}(\yy,t) = \sum_{i=1}^G \vecphi_i^{f,n}(\yy,t)\frac{\aa_i^n-\aa_i^{n-1}}{\Dt}$.
Comparing it with \qref{vecu}, we are left to show that $\uu_{h,i}^{n-1}=\frac{\aa_i^n-\aa_i^{n-1}}{\Dt}$
if $i$ is the index of a grid point on the FS interface.

In the later discussion (see the very last condition in the definition of $V^{\vphi_h^n}_{h,t^n}$ in \qref{Vh}),
we will see that fluid velocity $\uu_h^{n-1}$ and solid velocity $\vv_h^{n-1}$ agree at the grid points on the FS interface.

Let us use $\aa^{n-k}_{i_b}$ to denote grid points on
$\pa\Omg^f_{h,t^{n-k}}\backslash(\Sigma_1\cup\Sigma_2)$ for $k=0,1$.
So, by the way we construct $\Omg^f_{h,t^n}$ (recall $\vphi_h^n=\vphi_h^{n-1}+\Dt\vv_h^{n-1}$),
we know $\aa^{n-1}_{i_b}$ moves to $\aa^{n-1}_{i_b} + \Dt \uu^{n-1}_{h,i_b}$, i.e., $\aa^n_{i_b} = \aa^{n-1}_{i_b} + \Dt \uu^{n-1}_{h,i_b}$.
Therefore  $\uu^{n-1}_{h,i_b}= \frac{\aa_{i_b}^n-\aa_{i_b}^{n-1}}{\Dt}$.
\end{proof}
%
%
%
%
%

\subsection{The complete scheme}
From now on, we use the notation
\begin{gather}
 \<\cdot,\cdot\>_{(t)}=\<\cdot,\cdot\>_{\Omg^f_{h,t}}, \qquad \|\cdot\|_{(t)}=\|\cdot\|_{L^2(\Omg^f_{h,t})}
 =\<\cdot,\cdot\>^{\frac12}_{(t)}, \\
 \int_{\Omg^f_{h,t}}g(x)dx = \int_{(t)} g(x)dx.
\end{gather}
Inspired by \qref{onefield.conserve}, we propose the following first order scheme:
Suppose we are given $\uu_h^{n-1}$, $\vv_h^{n-1}$, $\vphi_h^{n-1}$ and $\TT^f_{h,t^{n-1}}$.
For the next moment $t^n$, first define
$\vphi_h^n$ by \qref{vphihn}:
\[
 \vphi_h^n = \vphi_h^{n-1} + \Dt \vv_h^{n-1}.
\]
Next, we construct $\TT^f_{h,t^n}$ and then the intermediate mesh $\TT^f_{h,t^{n-\frac12}}$ following
the discussion in the beginning of Section~\ref{S.fluid} and then Section~\ref{S.interMesh}.
Then we explicitly construct mesh velocity $\ww^n$ using \qref{wwnt} which is a finite element function defined on $\TT^f_{h,t^n}$.
Define the $\PP_m/\PP_{m-1}/\PP_m$ Lagrange finite element space $V^{\vphi_h^{n}}_{h,t^n}$ as follows:
\begin{align}
 V^{\vphi_h^{n}}_{h,t^n} = \Big\{& (\uu_h;p_h;\vv_h): \quad \uu_h\in C^0(\Omg^f_{h,t^n}), \quad p_h\in C^0(\Omg^f_{h,t^n}), \quad
 \vv_h\in C^0(\Omg^s), \nonumber \\
 & \forall \; T^{f,n}_j\in\TT^f_{h,t^n}, \; \uu_h\circ \Psi^{f,n}_{j} \in \PP_m(\hat{T}), \;
                                                 p_h\circ\Psi^{f,n}_{j} \in \PP_{m\!-\!1}(\hat{T}), \nonumber \\
 & \forall \; T^s_{j}\in\TT^s_{h}, \; \vv_h\circ \Psi^s_{j} \in \PP_{m}(\hat{T}),   \nonumber \\
 & \forall \; \mbox{ grid point }\zz_i \text{ of }\TT^\Gam_h, \; \uu_h(\vphi_h^{n}(\zz_i))=\vv_h(\zz_i) \Big\}. \label{Vh}
\end{align}
Here $\TT^\Gam_h$ is the mesh of $\Gam=\pa\Omg^f_{(0)}\cap \pa\Omg^s$. $\TT^s_{h}$ and $\TT^f_{h,t^n}$ are the meshes
of $\Omg^s$ and $\Omg^f_{h,t^n}$ respectively.
$\Psi^{f,n}_{j}$ is the mapping from the reference triangle $\hat{T}$ to the
$j$th physical triangle of the fluid domain at time $t^n$ which
is denoted by $T^{f,n}_{j}\in \TT^f_{h,t^n}$.
$\Psi^s_{j}$ is defined similarly, but for triangle $T^s_j\in \TT^s_h$.

Now, find $(\uu_h^n;p_h^n;\vv_h^n)\in V^{\vphi_h^{n}}_{h,t^n}$ with $\uu_h^n|_{\Sigma_1}=\uu_b$ and
$\vv_h^n|_{\Sigma_3}=\pa_t\vphi_b$ so that
for any finite element triple $(\tphi^{f,n};q^f;\tphi^s)\in V^{\vphi_h^{n}}_{h,t^n}$ with $\tphi^{f,n}|_{\Sigma_1}=0$ and
$\tphi^s|_{\Sigma_3}=0$,
\begin{align}
 & \frac{\rho^f}{\Dt}\(\<\uu_h^n,\tphi^{f,n}\>_{(t^n)}\!-\<\uu_h^{n\!-\!1},\vectphi^{f,n}(\cdot,t^{n-1})\>_{(t^\npp)}\)
 \nonumber \\
 & + \rho^f\<\div\(\vecu_h^n(\cdot,t^{\np})\otimes\(\lvecu_h^{n-1}(\cdot,t^\np)\!-\! \vecw^{n}(\cdot,t^\np)\)\),
                                                                        \; \vectphi^{f,n}(\cdot,t^{\np}) \>_{(t^\np)} \nonumber \\
 & - \rho^f\<\frac12 \(\div \lvecu_h^{n-1}(\cdot,t^\np)\) \vecu_h^n(\cdot,t^{\np}), \; \vectphi^{f,n}(\cdot,t^{\np}) \>_{(t^\np)} \nonumber \\
 & + \<\sig^f(\uu_h^n,p_h^n),\grad \tphi^{f,n}\>_{(t^n)} - \<\div\uu_h^{n}, q^f \>_{(t^n)} \nonumber \\
 & + \<\rho^s \frac{\vv_h^{n}-\vv_h^{n-1\!}}{\Dt}\! ,\tphi^s\>_{\Omg^s} + \<\sig^s(\vphi_h^n+\Dt\vv_h^n),\grad\tphi^s\>_{\Omg^s} \nonumber \\
 = & \<\rho^f\gg^f,\tphi^{f,n}\>_{(t^n)} +
     \<\sig_b^f,\tphi^{f,n}\>_{\Sigma_2}
     + \<\rho^s\gg^s,\tphi^s\>_{\Omg^s}
     +\<\sig_b^s,\tphi^s\>_{\Sigma_4}. \label{fluid.1}
\end{align}
The $\vectphi^{f,n}(\cdot,t)$ in \qref{fluid.1} is the backward in time extension of
vector basis function $\tphi^{f,n}$ by \qref{phi-st} (with obvious extension to vectors).
$\vecu_h^n(\cdot,t)$ is the backward in time extension of $\uu^{n}_h$ by \qref{ff-st}.
$\lvecu_h^{n-1}(\cdot,t)$ is the forward in time extension of $\uu^{n-1}_h$ by \qref{vecu}.
The technique of adding the term containing $-\frac12 (\div \lvecu_h^{n-1}(\cdot,t^\np))$ is standard and is initiated by \cite{Te}.
The above scheme is for $d=2$. When $d=3$, the 2nd and the 3rd lines of \qref{fluid.1} should be changed to
\begin{align}
  & +\frac{\rho^f}{2} \sum_{\ell=1}^2 \<\div\(\vecu_h^n(\cdot,t^{n_\ell})
    \otimes\(\lvecu_h^{n-1}(\cdot,t^{n_\ell})\!-\! \vecw^{n}(\cdot,t^{n_\ell})\)\),
    \; \vectphi^{f,n}(\cdot,t^{n_\ell}) \>_{(t^{n_\ell})} \nonumber \\
 &  -\frac{\rho^f}{2} \sum_{\ell=1}^2  \<\frac12 \(\div \lvecu_h^{n-1}(\cdot,t^{n_\ell})\) \vecu_h^n(\cdot,t^{n_\ell}),
    \; \vectphi^{f,n}(\cdot,t^{n_\ell}) \>_{(t^{n_\ell})} \label{fluid.1.3d}
\end{align}
where $\{t^{n_1},t^{n_2};\frac12,\frac12\}$ forms the two-point Gauss quadrature on $[t^{n-1},t^n]$ (Lemma~\ref{L.gcl.1}).


Let $\uu_h^n=\sum_{i=1}^{G_1} \uu^n_{h,i} \phi^{f,n}_i$, $p_h^n=\sum_{j=1}^{G_2} p^n_{h,j} q^{f}_j$
and $\vv_h^n=\sum_{k=1}^{G_3} \vv^n_{h,k} \phi^{s}_k$. \qref{fluid.1} leads to a system of equations
for $\{\uu^n_{h,i},p^n_{h,j},\vv^n_{h,k}\}$ and is linear for the fluid variables.
Proposition~\ref{P.basis.extend} tells us that the assembling of the load vectors and various matrices for each term in \qref{fluid.1}
uses only standard finite element basis functions defined on the corresponding mesh
indicated by the subscripts $t^n$, $t^\np$ or $t^{\npp}$ respectively.

Lastly, we would like to stress that the last equality condition in the definition of
$V^{\vphi^n_h}_{h,t^n}$ (\qref{Vh}) is trivial to enforce and will not complicate the programming:
When assembling the matrices and vectors, we simply need to equate the global index
of the fluid basis function $\tphi^{f}$ that is associated with $\vphi_h^n(\zz_i)$
with the global index of the solid basis function $\tphi^s$ that is associated with $\zz_i$,
for all grid point $\zz_i\in \TT^\Gam_h$.

\section{Stability} \label{S.conservative.stab}
Now, we are ready to prove the stability of scheme \qref{fluid.1}.
\begin{theorem} \label{T.stab.fluid}
Assume $W(\FF)$ is convex in \qref{strain} and assume $\uu_b=0$, $\Sigma_2=\emptyset$ and $\pa_t \vphi_b=0$.
Then for any $\Dt$ that satisfies \qref{stab.assum.1},
\begin{align}
 &\frac{\rho^f}{2\Dt}\(\|\uu^n_h\|^2_{(t^n)} -\|\uu^{n-1}_h\|^2_{(t^{n-1})}\)
  + \frac{\rho^f\nu^f}{2}\|\grad \uu^n_h+\grad \uu^{n,\top}_{h}\|^2_{(t^n)} \nonumber \\
  & + \frac{1}{\Dt}\(\IIs(\vv_h^n;\vphi_h^{n}+\Dt\vv_h^n)-\IIs(\vv_h^{n-1};\vphi_h^{n-1}+\Dt\vv_h^{n-1})\) \nonumber \\
 \le & \<\rho^f\gg^f,\uu_h^n\>_{(t^n)} + \<\rho^s\gg^s,\vv_h^n\>_{\Omg^s}
  + \<\sig_b^s,\vv_h^n\>_{\Sigma_4} \label{stab.conservative}
\end{align}
where energy function $\IIs(\vv;\vphi)=\frac{\rho^s}{2}\|\vv\|^2_{\Omg^s} + \int_{\Omg^s}W(\grad\vphi)$.
In particular, the following quantity is uniformly
bounded in $n$ and the bound is independent of the fluid mesh velocity:
\[
  2\;\IIs(\vv_h^n;\vphi_h^{n}+\Dt\vv_h^n)+ \rho^f\|\uu^n_h\|^2_{(t^n)} +
  \sum_{\ell=1}^n \rho^f\nu^f\|\grad \uu^\ell_h+\grad \uu^{\ell,\top}_{h}\|^2_{(t^\ell)}\Dt.
\]
\end{theorem}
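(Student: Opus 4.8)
The plan is to test the scheme \qref{fluid.1} against its own solution, i.e.\ to take $\tphi^{f,n}=\uu_h^n$, $q^f=-p_h^n$ and $\tphi^s=\vv_h^n$; this triple lies in $V^{\vphi_h^n}_{h,t^n}$ and satisfies the homogeneous boundary data because $\uu_b=0$ and $\pa_t\vphi_b=0$, and with it the backward extension $\vectphi^{f,n}(\cdot,t)$ becomes $\vecu_h^n(\cdot,t)$. I then read off each line of \qref{fluid.1}. The fourth line is routine: writing $\sig^f(\uu_h^n,p_h^n)=2\rho^f\nu^f\eps(\uu_h^n)-p_h^n\II$ and using $\<p_h^n\II,\grad\uu_h^n\>_{(t^n)}=\<p_h^n,\div\uu_h^n\>_{(t^n)}$, the pressure exactly cancels $-\<\div\uu_h^n,-p_h^n\>_{(t^n)}$, and symmetry of $\eps(\uu_h^n)$ leaves $\frac{\rho^f\nu^f}{2}\|\grad\uu_h^n+\grad\uu_h^{n,\top}\|^2_{(t^n)}$. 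The fifth line is exactly the solid estimate of Section~\ref{S.solid.stab}: the discrete inertia is bounded below by $\frac{\rho^s}{2\Dt}(\|\vv_h^n\|^2_{\Omg^s}-\|\vv_h^{n-1}\|^2_{\Omg^s})$, while convexity of $W$ via \qref{convex} and $\vphi_h^{n+1}=\vphi_h^n+\Dt\vv_h^n$ converts the stress term into $I_s(\vphi_h^{n+1})-I_s(\vphi_h^n)$, so that the two together reproduce the $\IIs$ difference of \qref{stab.conservative}, precisely as recorded in \qref{solid.5}.

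The heart of the argument, and the step I expect to be the main obstacle, is the fluid kinetic/convection block formed by lines one through three. Abbreviating the advection field $\bb=\lvecu_h^{n-1}-\vecw^n$ at time $t^\np$, I expand $\div(\vecu_h^n\otimes\bb)=(\div\bb)\,\vecu_h^n+(\bb\cdot\grad)\vecu_h^n$ and integrate the transport part by parts, using $((\bb\cdot\grad)\vecu_h^n)\cdot\vecu_h^n=\tfrac12\,\bb\cdot\grad|\vecu_h^n|^2$, to obtain $\<(\bb\cdot\grad)\vecu_h^n,\vecu_h^n\>_{(t^\np)}=-\tfrac12\<(\div\bb)\vecu_h^n,\vecu_h^n\>_{(t^\np)}+\tfrac12\int_{\pa\Omg^f_{h,t^\np}}|\vecu_h^n|^2\,\bb\cdot\nn$. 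Adding line two to the Temam term (line three), the two copies of $\tfrac{\rho^f}{2}\<(\div\lvecu_h^{n-1})\vecu_h^n,\vecu_h^n\>_{(t^\np)}$ cancel exactly (this is the sole purpose of the Temam term, cf.~\cite{Te}), and what survives is $-\tfrac{\rho^f}{2}\<(\div\vecw^n)\vecu_h^n,\vecu_h^n\>_{(t^\np)}$ together with the boundary integral $\tfrac{\rho^f}{2}\int_{\pa\Omg^f_{h,t^\np}}|\vecu_h^n|^2(\lvecu_h^{n-1}-\vecw^n)\cdot\nn$.

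The crucial point is that this boundary integral vanishes: $\Sigma_2=\emptyset$, on $\Sigma_1$ we have $\vecu_h^n=0$ since $\uu_b=0$, and on the FS interface Lemma~\ref{L.wu} gives $\vecw^n=\lvecu_h^{n-1}$, so the factor $\lvecu_h^{n-1}-\vecw^n$ is zero there. \emph{This is exactly where the nonlinear convection is indispensable}: it is the convective velocity $\lvecu_h^{n-1}$ that is made to agree with the mesh velocity $\vecw^n$ on the moving interface by Lemma~\ref{L.wu}, so for Stokes flow the interface contribution would survive and could not be controlled by the remaining energy. What is left of the convection block is therefore the single mesh term $-\tfrac{\rho^f}{2}\<(\div\vecw^n)\vecu_h^n,\vecu_h^n\>_{(t^\np)}$.

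It remains to turn this mesh term into a kinetic-energy difference. By the Geometric Conservation Law in Corollary~\ref{L.int.unun} (identity \qref{gcl.5}) it equals $-\tfrac{\rho^f}{2\Dt}\bigl(\|\uu_h^n\|^2_{(t^n)}-\|\vecu_h^n(\cdot,t^\npp)\|^2_{(t^\npp)}\bigr)$. Combining this with the first line $\tfrac{\rho^f}{\Dt}\bigl(\|\uu_h^n\|^2_{(t^n)}-\<\uu_h^{n-1},\vecu_h^n(\cdot,t^\npp)\>_{(t^\npp)}\bigr)$ and completing the square through $\tfrac{\rho^f}{2\Dt}\|\vecu_h^n(\cdot,t^\npp)\|^2_{(t^\npp)}-\tfrac{\rho^f}{\Dt}\<\uu_h^{n-1},\vecu_h^n(\cdot,t^\npp)\>_{(t^\npp)}=\tfrac{\rho^f}{2\Dt}\|\vecu_h^n(\cdot,t^\npp)-\uu_h^{n-1}\|^2_{(t^\npp)}-\tfrac{\rho^f}{2\Dt}\|\uu_h^{n-1}\|^2_{(t^\npp)}$, the whole fluid block becomes $\tfrac{\rho^f}{2\Dt}(\|\uu_h^n\|^2_{(t^n)}-\|\uu_h^{n-1}\|^2_{(t^\npp)})+\tfrac{\rho^f}{2\Dt}\|\vecu_h^n(\cdot,t^\npp)-\uu_h^{n-1}\|^2_{(t^\npp)}$. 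Dropping the nonnegative last square gives the first term of \qref{stab.conservative}, while the right-hand side of \qref{fluid.1} collapses to that of \qref{stab.conservative} (the $\sig_b^f$ term drops since $\Sigma_2=\emptyset$); this establishes \qref{stab.conservative}. For $d=3$ the same manipulations are applied termwise at the two Gauss points of \qref{fluid.1.3d}, each carrying weight $\tfrac12$, and the $\RR^3$ version \qref{gcl.gauss} of the GCL identity is used in place of \qref{gcl.5}. Finally, the uniform bound follows by summing \qref{stab.conservative} over the time levels, which telescopes the kinetic and $\IIs$ contributions and accumulates the viscous dissipation, after the forcing terms on the right are absorbed by Cauchy--Schwarz together with Korn/Poincar\'e and a discrete Gr\"onwall estimate.
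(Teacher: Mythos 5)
Your proof is correct and follows essentially the same route as the paper: the same test triple $(\uu_h^n;-p_h^n;\vv_h^n)$, the solid estimate via convexity as in \qref{solid.5}, integration by parts of the convection block with the Temam term cancelling half of $\div\lvecu_h^{n-1}$, the boundary term killed by Lemma~\ref{L.wu}, the GCL identity \qref{gcl.5} converting the $\div\vecw^n$ term into a kinetic-energy difference, and a completed square where the paper invokes Young's inequality. The only (equivalent) cosmetic differences are that the paper integrates by parts twice rather than expanding the divergence first, and it leaves the final telescoping/uniform-bound step unspoken.
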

\begin{proof}
We have discussed the solid part (5th line in \qref{fluid.1}) in Section~\ref{S.solid.stab} which leads to \qref{solid.5}.
In \qref{fluid.1}, let $\tphi^{f,n}=\uu_h^n$, $q^f=-p_h^n$ and $\tphi^s=\vv_h^n$. Note that
$\vectphi^{f,n}(\cdot,t^{\np})$ becomes $\vecu^n_h(\cdot,t^\np)$. We obtain
\begin{align}
 & \frac{\rho^f}{\Dt}\|\uu^n_h\|^2_{(t^n)} -\frac{\rho^f}{2\Dt}\(\|\vecu^n_h(\cdot,t^{n-1})\|^2_{(t^\npp)}
     + \|\uu^{n-1}_h\|^2_{(t^\npp)}\) +\<\sig^f(\uu_h^n,0),\grad \uu^n_h\>_{(t^n)} \nonumber \\
   & + \rho^f\<\div\(\vecu_h^n(\cdot,t^{\np})\otimes\(\lvecu_h^{n-1}(\cdot,t^\np)\!-\!\vecw^{n}(\cdot,t^\np)\)\),\;
                                                                               \vecu^n_h(\cdot,t^\np) \>_{(t^\np)} \nonumber \\
   & - \rho^f\<\frac12 \(\div\lvecu_h^{n-1}(\cdot,t^\np)\) \vecu_h^n(\cdot,t^\np), \; \vecu^n_h(\cdot,t^\np) \>_{(t^\np)} \nonumber \\
   & + \frac{1}{\Dt}\IIs(\vv_h^n;\vphi_h^{n+1})-\frac{1}{\Dt}\IIs(\vv_h^{n-1};\vphi_h^{n}) \nonumber \\
  \le & \<\rho^f\gg^f,\uu_h^n\>_{(t^n)}
     + \<\rho^s\gg^s,\vv_h^n\>_{\Omg^s} + \<\sig_b^s,\vv_h^n\>_{\Sigma_4}.  \label{fluid.3}
\end{align}
Here, without loss of generality we consider $d=2$.
Now, look at the 2nd line in the above inequality
and recall $\div A = \pa_j A_{ij}$. After integration by part, we find it equals
\begin{align*}
  & - \rho^f\frac12\int_{(t^\np)} \(\lvecu_h^{n-1}(\cdot,t^\np)\!-\!\vecw^{n}(\cdot,t^\np)\)\cdot
                                                                               \grad |\vecu^n_h(\cdot,t^\np)|^2 \\
  & + \rho^f\int_{\pa \Omg^f\!\!\!{}_{h,t^{\np}}} \(\lvecu_h^{n-1}(\cdot,t^\np)\!-\!\vecw^{n}(\cdot,t^\np)\)\cdot\nn |\vecu^n_h(\cdot,t^\np)|^2.
\end{align*}
Integration by part once again, the 2nd line in \qref{fluid.3} becomes
\begin{align}
 & \rho^f \frac12\int_{(t^\np)} \div\(\lvecu_h^{n-1}(\cdot,t^\np)\!-\!\vecw^{n}(\cdot,t^\np)\)
                                                                               |\vecu^n_h(\cdot,t^\np)|^2 \nonumber \\
 & + \rho^f \frac12 \int_{\pa \Omg^f\!\!\!{}_{h,t^{\np}}}
    \(\lvecu_h^{n-1}(\cdot,t^\np)\!-\!\vecw^{n}(\cdot,t^\np)\)\cdot\nn |\vecu^n_h(\cdot,t^\np)|^2.
 \label{fluid.5}
\end{align}
Because of the assumption $\uu_b=0$ and $\Sigma_2=\emptyset$ and
Lemma~\ref{L.wu} (indeed, we only need their normal components equal), the boundary integral in \qref{fluid.5} vanishes.
The first half of the volume integral in \qref{fluid.5} cancels with the 3rd line of \qref{fluid.3}.
So, \qref{fluid.3} now becomes
\begin{align}
 & \frac{\rho^f}{\Dt}\|\uu^n_h\|^2_{(t^n)} -\frac{\rho^f}{2\Dt}\(\|\vecu^n_h(\cdot,t^{n-1})\|^2_{(t^{n-1})}
   + \|\uu^{n-1}_h\|^2_{(t^{n-1})}\) + \<\sig^f(\uu_h^n,0),\grad \uu^n_h\>_{(t^n)} \nonumber \\
 & - \frac{\rho^f}{2}\!\!\int_{(t^\np)} \div\(\vecw^{n}(\cdot,t^\np)\) |\vecu^n_h(\cdot,t^\np)|^2
   + \frac{1}{\Dt}\IIs(\vv_h^n;\vphi_h^{n+1})-\frac{1}{\Dt}\IIs(\vv_h^{n-1};\vphi_h^{n}) \nonumber \\
\le & \<\rho^f\gg^f,\uu_h^n\>_{(t^n)}
     + \<\rho^s\gg^s,\vv_h^n\>_{\Omg^s} + \<\sig_b^s,\vv_h^n\>_{\Sigma_4}. \label{fluid.4}
\end{align}
Then we make use of \qref{gcl.5} to handle the 2nd and the 5th terms.
Finally, we use $\<\sig^f(\uu_h^n,0),\grad \uu^n_h\>_{(t^n)}
=\frac{\rho^f\nu^f}{2}\|\grad\uu_h^n+\grad\uu_h^{n,\top}\|^2_{(t^n)}$ to conclude.
\end{proof}


\begin{rmk} \label{R.Remark1}
Our FSI solver works as follows: at $t^{n-1}$, we have $\uu_h^{n-1}$, $\vv_h^{n-1}$, $\vphi_h^{n-1}$.
First, we define $\vphi_h^{n}=\vphi_h^{n-1}+\Dt \vv_h^{n-1}$.
Then we assume \qref{stab.assum.1} is satisfied so that we can construct the fluid domain $\Omg^f_{h,t^{n}}$ with mesh $\TT^f_{h,t^n}$.
Then, on $\TT^f_{h,t^n}$ and $\TT^s_h$, we solve for $(\uu_h^{n};p_h^n;\vv_h^{n})$.
Immediately there comes a good news for the construction of $\Omg^f_{h,t^{n+1}}$ which is for the next step:
From \qref{stab.conservative}, even at $t^n$, we already know
\begin{align}
  & \frac{\mu^s}{4}\|\grad \eeta_h^{n+1}+\grad \eeta_h^{n+1,\top}\|_{\Omg^s}^2
  + \frac{\lam^s}{2}\|\div \eeta_h^{n+1}\|_{\Omg^s}^2
 + \frac{\rho^s}{2}\|\vv_h^{n}\|_{\Omg^s}^2 + \frac{\rho^f}{2} \|\uu_h^{n}\|_{(t^{n})}^2 \nonumber \\
\le & \frac{\mu^s}{4}\|\grad \eeta_h^{n}+\grad \eeta_h^{n,\top}\|_{\Omg^s}^2
  + \frac{\lam^s}{2}\|\div \eeta_h^{n}\|_{\Omg^s}^2
 + \frac{\rho^s}{2}\|\vv_h^{n-1}\|_{\Omg^s}^2 + \frac{\rho^f}{2} \|\uu_h^{n-1}\|_{(t^{n-1})}^2.
\end{align}
For simplicity, we have assumed linear elasticity
with $\eeta^{n+1}(\zz)=\vphi^{n+1}(\zz)-\zz$ and ignored the body forces and $\sig_b^s$.
So $\vphi_h^{n+1}$ is rather regular which makes the assumption \qref{stab.assum.1} less stringent
because the construction of $\Omg^f_{h,t^{n+1}}$ uses $\vphi_h^{n+1}(\Gam)$. Obviously, larger $\mu^s$
and $\lam^s$ would provide larger support for validating assumption \qref{stab.assum.1}. When $\mu^s=+\infty=\lam^s$,
our method solves fluid and rigid body interaction problem.
\end{rmk}

\subsection{Existence and uniqueness} \label{S.existence}
Certainly, before we ever discuss the stability, we need show the system \qref{fluid.1}
does have a solution.
As the mesh is determined explicitly and the convection term in the fluid is handled semi-implicitly,
the only nonlinear term is $\<\sig^s(\vphi_h^n+\Dt\vv_h^n),\grad\tphi^s\>_{\Omg^s}$.

If the solid is linear elastic (see \qref{lin.material}),
\qref{fluid.1} is indeed a linear system for $(\uu^n_h;p^n_h;\vv^n_h)$ and existence follows.
The uniqueness follows from the stability results:
When all the forcing terms vanish, by choosing $(\tphi^{f,n},q^f,\tphi^s)=(\uu^n_h;p^n_h;\vv^n_h)$, we know
$\uu^n_h=0$ and $\vv^n_h=0$. Then \qref{fluid.1} implies $\<p^n_h,\div \tphi^{f,n}\>_{(t^n)}=0$ for all $\tphi^{f,n}$
in the finite element space for fluid velocity and having zero boundary condition on $\Sigma_1 \subsetneq \pa\Omg^f_{h,t^n}$.
So, by the inf-sup condition, $p^n_h=0$.

For nonlinear solid with convex strain energy,
we need to assume $\uu_b=0$, $\Sigma_2=\emptyset$ and $\pa_t \vphi_b=0$.
Then the stability itself will imply existence by the following Lemma
\cite[Chap 2. Lemma 1.4]{Te}. The proof is a simple
application of Brouwer fixed point theorem.
\begin{lemma} \label{L.temam}
Let $X$ be a finite dimensional Hilbert space with inner product $(\cdot,\cdot)$ and norm $\|\cdot\|$.
Let T be a continuous mapping from $X$ into itself such that there is a constant $\beta$ so that
\[
  (T(\xi),\xi) >0 ,\qquad \forall \, \|\xi\|=\beta>0.
\]
Then there exists $\xi\in X$ with $\|\xi\|\le \beta$ such that $T(\xi)=0$.
\end{lemma}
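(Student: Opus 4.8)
The plan is to argue by contradiction via the Brouwer fixed point theorem. Since $X$ is finite dimensional, the closed ball $\bar{B}_\beta=\{\xi\in X:\|\xi\|\le\beta\}$ is compact and convex. Suppose, contrary to the claim, that $T(\xi)\neq 0$ for every $\xi\in\bar{B}_\beta$. Then I would introduce the radial renormalization
\[
  S(\xi)=-\beta\,\frac{T(\xi)}{\|T(\xi)\|},
\]
which is well defined precisely because of the contradiction hypothesis (so that we never divide by zero), and is continuous on $\bar{B}_\beta$ since $T$ is. As $\|S(\xi)\|=\beta$ for every $\xi$, the map $S$ sends $\bar{B}_\beta$ into the sphere $\{\|\xi\|=\beta\}\subset\bar{B}_\beta$, so it is a continuous self-map of a compact convex set.

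By Brouwer's fixed point theorem, $S$ has a fixed point $\xi^\ast=S(\xi^\ast)$ in $\bar{B}_\beta$. Taking norms gives $\|\xi^\ast\|=\|S(\xi^\ast)\|=\beta$, so $\xi^\ast$ lies exactly on the sphere where the hypothesis $(T(\xi),\xi)>0$ is available. I would then pair the fixed-point identity with $T(\xi^\ast)$:
\[
  (T(\xi^\ast),\xi^\ast)=\Big(T(\xi^\ast),\,-\beta\,\frac{T(\xi^\ast)}{\|T(\xi^\ast)\|}\Big)
   =-\beta\,\|T(\xi^\ast)\|<0,
\]
the strict sign coming from $T(\xi^\ast)\neq0$. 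This flatly contradicts $(T(\xi^\ast),\xi^\ast)>0$.

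Hence the supposition fails and $T$ must vanish somewhere in $\bar{B}_\beta$, which is the assertion. There is no genuine obstacle here: the entire content is the choice of the radial retraction $S$, after which finite-dimensionality (compactness of $\bar{B}_\beta$) and Brouwer do all the work. The only two points deserving a line of care are that $S$ is a legitimate continuous self-map of the ball --- guaranteed by the nonvanishing hypothesis used to build $S$ --- and that the Brouwer fixed point automatically sits on the boundary sphere rather than in the interior, which is immediate from $\|S(\xi)\|\equiv\beta$.
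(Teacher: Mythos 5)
Your proof is correct and is essentially the argument the paper has in mind: the paper simply cites Temam (Chap.\ 2, Lemma 1.4) and notes that the result is "a simple application of Brouwer fixed point theorem," and your radial retraction $S(\xi)=-\beta\,T(\xi)/\|T(\xi)\|$ followed by the sign contradiction at the fixed point is exactly that standard argument.
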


\begin{theorem} \label{T.existence}
Assume $W(\FF)$ is convex in \qref{strain}.
If $\uu_b=0$, $\Sigma_2=\emptyset$ and $\pa_t \vphi_b=0$, then \qref{fluid.1} has a unique solution.
\end{theorem}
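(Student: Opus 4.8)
The text has already settled the linear-elastic case, where \qref{fluid.1} is a square linear system whose solvability and uniqueness follow from the energy identity and the inf-sup condition. So the plan is to handle the genuinely nonlinear case, in which the only nonlinear term of \qref{fluid.1} is $\<\sig^s(\vphi_h^n+\Dt\vv_h^n),\grad\tphi^s\>_{\Omg^s}$, via the Brouwer-type Lemma~\ref{L.temam}, in the spirit of its original use by Temam. First I would eliminate the pressure: let $X$ be the finite-dimensional subspace of $V^{\vphi_h^n}_{h,t^n}$ consisting of pairs $(\uu_h;\vv_h)$ with $\uu_h|_{\Sigma_1}=0$, $\vv_h|_{\Sigma_3}=0$, obeying the interface matching constraint of \qref{Vh}, and \emph{discretely divergence free} in the sense that $\<\div\uu_h,q^f\>_{(t^n)}=0$ for every pressure $q^f$. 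On such functions the pressure part of $\sig^f$ drops out, since $\<-p_h\II,\grad\uu_h\>=-\<p_h,\div\uu_h\>=0$. Equipping $X$ with any inner product, I define the continuous map $T:X\to X$ by declaring $(T(\uu_h,\vv_h),(\tphi^f,\tphi^s))$ to be the left-hand side minus the right-hand side of \qref{fluid.1} (with the $q^f$ line suppressed) for all $(\tphi^f,\tphi^s)\in X$; continuity holds because $\sig^s=\pa W/\pa\FF$ is continuous.

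Next I would verify the coercivity hypothesis $(T(\xi),\xi)>0$ on a large sphere. Pairing $T$ with $\xi=(\uu_h,\vv_h)$ is exactly the computation in the proof of Theorem~\ref{T.stab.fluid}: the convection and geometric-conservation terms recombine through Lemma~\ref{L.wu} and \qref{gcl.5}, the viscous term gives $\frac{\rho^f\nu^f}{2}\|\grad\uu_h+\grad\uu_h^\top\|^2_{(t^n)}$, and the convexity step \qref{convex} bounds the solid term below by $\frac{1}{\Dt}\IIs(\vv_h;\vphi_h^n+\Dt\vv_h)$ minus a fixed constant built from the (given) data at $t^\npp$. Since this chain of inequalities is algebraic, it holds for arbitrary $\xi$, not only for a solution. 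Because $W$ is convex it admits an affine minorant, so $\int_{\Omg^s}W(\grad(\vphi_h^n+\Dt\vv_h))$ is bounded below by an affine function of $\vv_h$; hence $(T(\xi),\xi)\ge \frac{\rho^f}{2\Dt}\|\uu_h\|^2_{(t^n)}+\frac{\rho^s}{2\Dt}\|\vv_h\|^2_{\Omg^s}-C_1\|\xi\|-C_0$, with $C_0,C_1$ collecting the fixed $t^\npp$ data together with the body forces and $\sig_b^s$. The two $L^2$ inertial terms are genuine norms on the finite-dimensional $X$ (so $W$ need not be coercive), whence by norm equivalence the right-hand side is $\ge c\|\xi\|^2-C_1\|\xi\|-C_0$, which is positive for $\|\xi\|=\beta$ large. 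Lemma~\ref{L.temam} then yields a discretely divergence-free solution $(\uu_h^n,\vv_h^n)$. The pressure is recovered as in the linear case: with $(\uu_h^n,\vv_h^n)$ frozen, \qref{fluid.1} for a general $\tphi^f$ reads $\<p_h^n,\div\tphi^f\>_{(t^n)}=\ell(\tphi^f)$ for a linear functional $\ell$ vanishing on $X$, so inf-sup stability of the $\PP_m/\PP_{m-1}$ pair determines a unique $p_h^n$.

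For uniqueness I would take two solutions $(\uu_1;p_1;\vv_1)$ and $(\uu_2;p_2;\vv_2)$ at $t^n$ and subtract the two copies of \qref{fluid.1}; note they share the explicitly constructed data $\uu_h^\npp,\vv_h^\npp,\vphi_h^n$ and the same meshes, so those contributions cancel. Testing the difference with $(\delta\uu,-\delta p,\delta\vv)$, where $\delta\uu=\uu_1-\uu_2$ and so on, every term except the solid stress is linear and reproduces the homogeneous stability estimate: the time plus convection/GCL terms give $\frac{\rho^f}{2\Dt}\|\delta\uu\|^2_{(t^n)}$ together with a nonnegative backward-extension term, the boundary contribution vanishing by $\delta\uu|_{\Sigma_1}=0$, $\Sigma_2=\emptyset$ and Lemma~\ref{L.wu}; the viscous term gives $\frac{\rho^f\nu^f}{2}\|\grad\delta\uu+\grad\delta\uu^\top\|^2_{(t^n)}$; and the solid inertia gives $\frac{\rho^s}{\Dt}\|\delta\vv\|^2_{\Omg^s}$. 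The solid-stress difference is $\frac{1}{\Dt}\<\sig^s(\FF_1)-\sig^s(\FF_2),\FF_1-\FF_2\>_{\Omg^s}$ with $\FF_i=\grad(\vphi_h^n+\Dt\vv_i)$, which is nonnegative because convexity of $W$ makes $\sig^s$ monotone. Since the right-hand side difference vanishes, the sum of these nonnegative quantities is zero; the $L^2$ inertial terms then force $\delta\uu=0$ and $\delta\vv=0$, and $\delta p=0$ follows from the inf-sup condition.

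The step I expect to require the most care is the coercivity verification. One must check that the convection/GCL cancellations of the stability proof still go through for arbitrary $\xi$ on the sphere, in particular that the boundary term produced by the convection term vanishes — which hinges precisely on the standing assumptions $\uu_b=0$, $\Sigma_2=\emptyset$ together with Lemma~\ref{L.wu} — and that the merely convex (not coercive) strain energy is nonetheless controlled from below, so that the positive $L^2$ inertial and viscous contributions dominate the linear data once $\beta$ is taken large enough.
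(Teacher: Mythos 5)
Your proposal is correct, and its skeleton is the paper's own: existence via the Brouwer-type Lemma~\ref{L.temam} applied to a map whose pairing with the unknown reproduces the stability computation of Theorem~\ref{T.stab.fluid}, and uniqueness via the monotonicity \qref{convex.stab} of $\sig^s$ (from convexity of $W$) followed by the inf-sup condition for the pressure --- that part of your argument is essentially identical to the paper's proof. The genuine difference is your treatment of the pressure in the existence step, and there your version is actually sharper. The paper keeps $p_h^n$ as an argument of the fixed-point map, writing \qref{fluid.1} as $(T_{n-1}(\uu_h^n;p_h^n;\vv_h^n),(\tphi^{f,n};q^f;\tphi^s))=0$ and asserting that $(T_{n-1}(\xi),\xi)$ dominates the left-hand side of \qref{stab.conservative}, hence is positive when $\|\xi\|$ is large. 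But the stability estimate gives no control of the pressure: on a pure pressure direction $\xi=(0;p;0)$ every term of the pairing vanishes identically (the only term containing $q^f$ is $-\<\div\uu_h^n,q^f\>_{(t^n)}$, which is zero because $\uu_h^n=0$, and all remaining terms carry the zero test functions $\tphi^{f,n}$, $\vectphi^{f,n}$ or $\tphi^s$), so $(T_{n-1}(\xi),\xi)=0$ at such points of any sphere and the strict-positivity hypothesis of Lemma~\ref{L.temam} cannot hold in a space whose norm includes $\|p_h^n\|$. Your route --- restrict to the discretely divergence-free pairs $(\uu_h;\vv_h)$, on which the pressure terms drop out, run the coercivity argument there, and then recover a unique $p_h^n$ by inf-sup since the residual functional annihilates the discrete kernel of the divergence --- is the standard repair of exactly this lacuna. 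You also make explicit what the paper compresses into ``positive when the norm is large enough'': the affine minorant of the convex $W$ (so the merely convex, non-coercive strain energy cannot spoil coercivity), and the equivalence of the $L^2$ inertial norms with the norm of the finite-dimensional space $X$. One small caution: in your pressure-recovery step the functional $\ell$ acts on coupled pairs $(\tphi^f,\tphi^s)$ from \qref{Vh}, not on $\tphi^f$ alone, but since $\ell$ vanishes on the kernel of the divergence within that coupled space, the inf-sup argument goes through unchanged.
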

\begin{proof}
It is clear that we can define a continuous mapping $T_{n-1}(\uu_h^n;p^n_h;\vv^n_h)$
as well as the space $X$ and inner product $(\cdot,\cdot)$ so that
\qref{fluid.1} can be written as
\[
  \(T_{n-1}(\uu_h^n;p^n_h;\vv^n_h), (\tphi^{f,n};q^f;\tphi^s)\)=0.
\]
The subscript $n-1$ in $T_{n-1}$ means the mapping depends on
$(\uu_h^{n-1};\vv^{n-1}_h)$ and also boundary data and body forces.
From \qref{stab.conservative}, we know
$\(T_{n-1}(\uu^n_h;p^n_h;\vv^n_h),(\uu^n_h;p^n_h;\vv^n_h)\) \ge
\text{the left hand side of \qref{stab.conservative}}$
which is positive when $\|(\uu^n_h;p^n_h;\vv^n_h)\|$ is large enough.
Hence the existence follows from Lemma~\ref{L.temam}.

\medskip

Now consider the uniqueness.
Because of the convexity of $W(\FF)$ and $\vphi_h^n+\Dt\vv_h^n-(\vphi_h^n+\Dt\tvv_h^n)=\Dt(\vv_h^n-\tvv_h^n)$,
\begin{equation} \label{convex.stab}
\<\sig^s(\vphi_h^n+\Dt\vv_h^n)-\sig^s(\vphi_h^n+\Dt\tvv_h^n),\; \grad(\vv_h^n-\tvv_h^n)\>_{\Omg^s} \ge 0
\end{equation}
for any $\vv_h^n$ and $\tvv_h^n$. Now suppose we have two solution of \qref{fluid.1}.
Let us call them $(\uu_h^n;p^n_h;\vv^n_h)$ and $(\tuu_h^n;\tilde{p}^n_h;\tvv^n_h)$.
We take difference of the \qref{fluid.1}'s satisfied by $(\uu_h^n;p^n_h;\vv^n_h)$
and $(\tuu_h^n;\tilde{p}^n_h;\tvv^n_h)$ respectively and let the test function be
$(\uu_h^n-\tuu_h^n;p^n_h-\tilde{p}^n_h;\vv_h^n-\tvv^n_h)$.
From the stability results as well as \qref{convex.stab},
we immediately obtain $\uu_h^n=\tuu_h^n$ and $\vv_h^n=\tvv_h^n$.
After that, from the difference of the \qref{fluid.1}'s, we have
\[
  \<p_h^n-\tilde{p}^n_h,\div\tphi^{f,n}\>_{(t^n)}=0
\]
for all $\tphi^{f,n}$
in the finite element space for fluid velocity and having zero boundary condition on $\Sigma_1 \subsetneq \pa\Omg^f_{h,t^n}$.
So, by the inf-sup condition, $p^n_h-\tilde{p}^n_h=0$.
\end{proof}

\section{Numerical test} \label{S.num}

The finite element package we have implemented is in some sense an
upgraded version of iFEM due to Long Chen \cite{Ch,CZ}. iFEM is an adaptive
piecewise linear finite element package based on MATLAB. It uses a
beautiful data structure to represent the mesh and also provides
efficient MATLAB subroutines to manipulate the mesh.
In particular, local refinement and coarsening can
be done fairly easily. For our purposes, we have extended it to Taylor-Hood
isoparametric Lagrange elements $\PP_m/\PP_{m-1}$ with $m=2,...,5$. The finite element mesh
is generated by the
DistMesh of Persson and Strang \cite{PS}.


We present two numerical tests: (I) The first case is Navier-Stokes flow past
a linear elastic semi-cylinder. See Fig.~\ref{F.1} for an illustration.
The semi-cylinder is placed inside a channel and is attached to the floor.
The size of the channel is $[0,6.5]\times[-0.5,1]$. The cylinder is centered at $[1.5,-0.5]$ and has radius 0.5.
The inflow from the left is prescribed by $(u,v)=\(g(t)(1+2y)(1-y),0\)$ where $g(t)=\frac{1-\cos(\frac{\pi}{2}t)}{2}$ when $t\le 2$
and $g(t)=1$ when $t\ge 2$. At the outflow boundary, we use $\sig^f\nn=0$ as the boundary condition \cite{Liu4}.
(II) The second case is a Navier-Stokes flow enforced vibrating bar. This problem is proposed by \cite{TH2}.
See Fig.~\ref{F.4} for an illustration.
A rigid cylinder centered at $(0.2,0.2)$ with radius $0.05$ is fixed inside a channel of size $[0,2.5]\times[0,0.41]$.
A horizontal St. Venant-Kirchhoff bar with length 0.35 and width 0.02 is attached to the rigid cylinder.
The surface where they touch is curved.
The center of the cylinder and the center of the bar
have the same height initially. The material point on the tail of the bar which is initially at $(0.6,0.2)$ is called $P_{\text{tail}}$.
The inflow velocity of the channel is $(u,v)=g(t)(\frac{12}{0.1681}y(0.41-y), 0)$ where the same $g(t)$ as in case (I) is used.
We also use the same outflow boundary condition as in case (I).
The physical parameters for these two test problems are listed in Table~\ref{T.1}.
\begin{table}[h]
\begin{center}
\begin{tabular}{|r|r|r|r|r|r|r|r|r|}
\hline
   & solid type & $\rho^f$ & $\nu^f$ & $\rho^s$ & $\mu^s$ & $\lam^s$ & $\gg^f$ & $\gg^s$ \\
\hline
case (I) & linear & 1 & 1 & 1 & 50 & 500 & (0,0) & (0,0) \\
\hline
case (II) & St.Venant-Kirchhoff & 1 & 0.001 & 1 & 2000 & 8000 & (0,$-2$) & (0,$-2$) \\
\hline
\end{tabular}
\end{center}
\caption{Physical parameters. (Flow is incompressible Navier-Stokes.)}\label{T.1}
\end{table}
The scheme we tested is \qref{fluid.1} except that the
St. Venant-Kirchhoff material in case (II) is treated semi-implicitly by \qref{solid.lin.3}
for efficiency.

We use case (I) for both stability and accuracy check.
To verify the stability, we use $\PP_2/\PP_1/\PP_2$ elements and take $\Dt=1$ to integrate to $t=10$.
The computational mesh is shown in Fig.~\ref{F.1}.
In the captions, we state parameters of the meshes
where $h_{\max}$ and $h_{\min}$ are the sizes of the largest and smallest edges.
The computational domain for the fluid will change but the computational domain for the solid will remain the same.
When $\Dt=1$, by $t=4$ (so after 4 iterations if doing time matching), the system has almost reached steady state.
The CPU time in that situation is about 12 seconds on an IBM Thinkpad laptop with 3G memory.
Even though it has intel Core 2 Duo CPU @ 2.8 GHz, the Matlab is run on a single thread mode.
We compare our result with result from a domain decomposition approach
and find that they agree rather well (see Fig.~\ref{F.3}). 
Then we verify the first order temporal accuracy of \qref{fluid.1} also using case (I).
We do not have a closed form for the exact solution and so we compute with a very small $\Dt$
and use the result as the ``exact" solution to do the accuracy check.
The results are listed in Table~\ref{Table.accu} and from that we see clean first order accuracy in time
(see the numbers put insider the bracket in Table~\ref{Table.accu}).
\begin{table}[htpb]
\begin{center}
\begin{tabular*}{1\textwidth}{@{\extracolsep{\fill}}|c||c|c|c|c|} \hline
$E$ \quad $\backslash$ \quad $\Dt$ & $0.1$ & $0.05$ & $0.025$ & $0.0125$ \\ \hline

$\|\varphi_1-\varphi_{1,\Dt}\|_{L^2(\Omg^s)}$ & $-2.48$ & $-2.78$ (0.988)  & $-3.08$ (0.998)  & $-3.38$ (1) \\
$\|\varphi_1-\varphi_{1,\Dt}\|_{L^\infty(\Omg^s)}$ & $-2.02$ & $-2.32$ (0.986)  & $-2.62$ (0.997)  & $-2.92$ (1) \\
$\|\grad(\varphi_1-\varphi_{1,\Dt})\|_{L^2(\Omg^s)}$ & $-1.89$ & $-2.19$ (0.99)  & $-2.49$ (0.999)  & $-2.79$ (1) \\
$\|\grad(\varphi_1-\varphi_{1,\Dt})\|_{L^\infty(\Omg^s)}$ & $-1.11$ & $-1.44$ (1.12)  & $-1.77$ (1.08)  & $-2.08$ (1.05) \\
\hline
$\|\varphi_2-\varphi_{2,\Dt}\|_{L^2(\Omg^s)}$ & $-3.44$ & $-3.75$ (1.05)  & $-4.05$ (1.01)  & $-4.35$ (0.98) \\
$\|\varphi_2-\varphi_{2,\Dt}\|_{L^\infty(\Omg^s)}$ & $-2.93$ & $-3.25$ (1.07)  & $-3.56$ (1.03)  & $-3.87$ (1.01) \\
$\|\grad(\varphi_2-\varphi_{2,\Dt})\|_{L^2(\Omg^s)}$ & $-2.78$ & $-3.09$ (1.05)  & $-3.4$ (1.01)  & $-3.7$ (0.993) \\
$\|\grad(\varphi_2-\varphi_{2,\Dt})\|_{L^\infty(\Omg^s)}$ & $-1.8$ & $-2.02$ (0.714)  & $-2.26$ (0.797)  & $-2.53$ (0.9) \\

\hline
\end{tabular*}
\end{center}
\caption{First order accuracy in time:
$\log_{10} E$ (and local order $\alpha$) vs $\Dt$.
$\alpha = \frac{\log_{10} (E_{k-1}/E_k)}{\log_{10} (\Dt_{k-1}/\Dt_k)}$.
We use $\PP_5/\PP_4/\PP_5$ isoparametric finite element mesh which is a global refinement (means $h\to h/2$) of the mesh shown in Fig.~\ref{F.1}.
We integrate to $t=1$ using $\Dt=[0.1,0.05,0.025,0.0125]$ and then
compare the resulting structure position (called $(\varphi_{1,\Dt},\varphi_{2,\Dt})$) with the
result using $\Dt=5\times 10^{-5}$ (called $(\varphi_{1},\varphi_{2})$).}
\label{Table.accu}
\end{table}
Table~\ref{Table.accu} does not show the error right at the interface. So, in Fig.~\ref{F.interface},
we plot the interface positions obtained with different $\Dt$ and plot the error of the
position vector on the FS interface $\Gam$ (which is a half circle and hence
are labeled by the angle $\theta\in[0,\pi]$).
From the right plot of Fig.~\ref{F.interface}, we can also see clearly that when $\Dt$
is reduced by half, the error decreases by half.

For case (II), we use $\PP_3/\PP_2/\PP_3$ elements and take $\Dt=0.0005$ to integrate to $T=8$.
The computational mesh is shown in Fig.~\ref{F.4}.
Some snap-shots of the results are shown in Fig.~\ref{F.5}.
The time step for case (II) is taken to be small merely for accuracy purpose as our scheme is first order accurate in time.
We can take much larger time step. Indeed, we have taken $\Dt=0.1$ and integrate to $t=20$ for case (II) using
the same mesh. The only problem with $\Dt=0.1$ is accuracy: For example, by $t=8$ the bar with
$\Dt=0.1$ just starts to vibrate while the bar with $\Dt=0.0005$ has already reached its periodic vibrating stage.
For case (II), we record the lift and drag forces as well as
the position of $P_{\text{tail}}$. In our non-dimensionalized equations, the lift and drag forces are
the $x$ and $y$ components of $1000\int_{S} \sig^f\nn ds$ where $S$ is the surface of the cylinder+bar and the $\nn$
is the outward normal with respect to the cylinder+bar.
According to \cite{TH2}, the lift and drag forces are $[-147.56,152.00]$ and $[434.64,479.96]$ respectively,
and the displacements in the $x$ and $y$ directions of $P_{\text{tail}}$ are
$[-5.22,-0.16]\times 10^{-3}$ and $[-32.90,35.86]\times 10^{-3}$ respectively.
Here we use interval $[a,b]$ to indicate the range of a periodic oscillating quantity.
We stress that we have used our favorite traction type open boundary condition
($\sig^f\nn=-p_g\nn$ where $p_g$ satisfies $\grad p_g = \rho^f\gg^f$)
which is physical and allows the bar to contract or expand freely. Moveover, it
nails down the arbitrary constant in the fluid pressure \cite{Liu4}.
If one prescribes the outflow profile which is the same as the inflow profile,
the volume of the bar will keep the same and
the arbitrary constant in the pressure will be determined by this constraint.
\cite{TH2} does not state the open boundary condition it uses.
If \cite{TH2} uses Dirichlet type open boundary condition, we expect our results be slightly different from those of \cite{TH2}.

\begin{figure}[hbtp]
\centerline{
\includegraphics[width=2.8in]{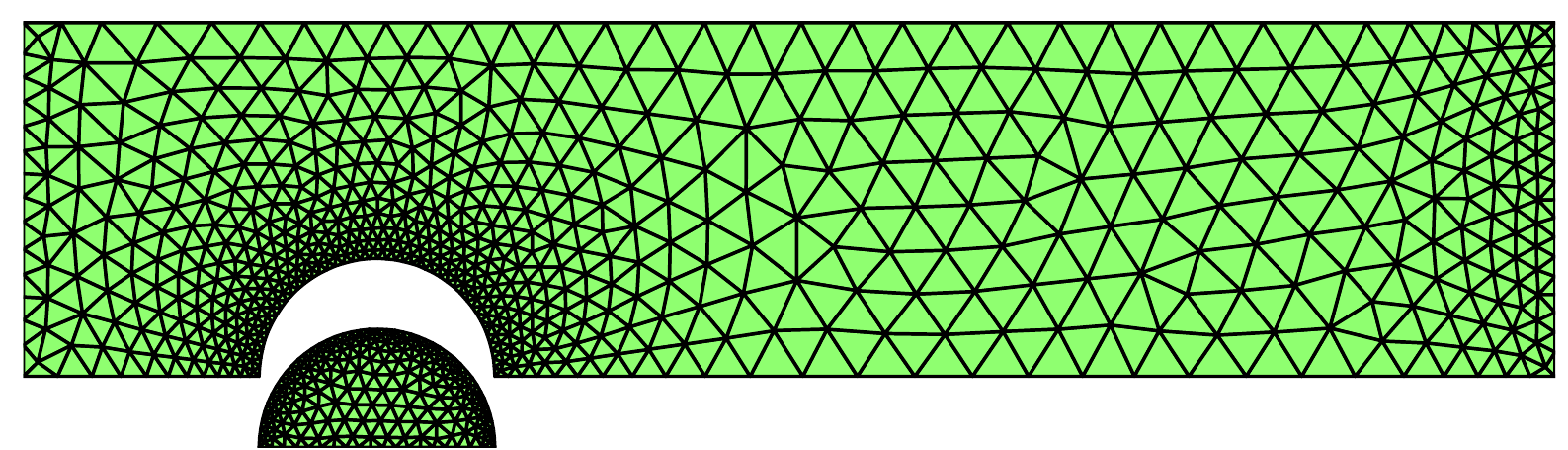}
\includegraphics[width=2.8in]{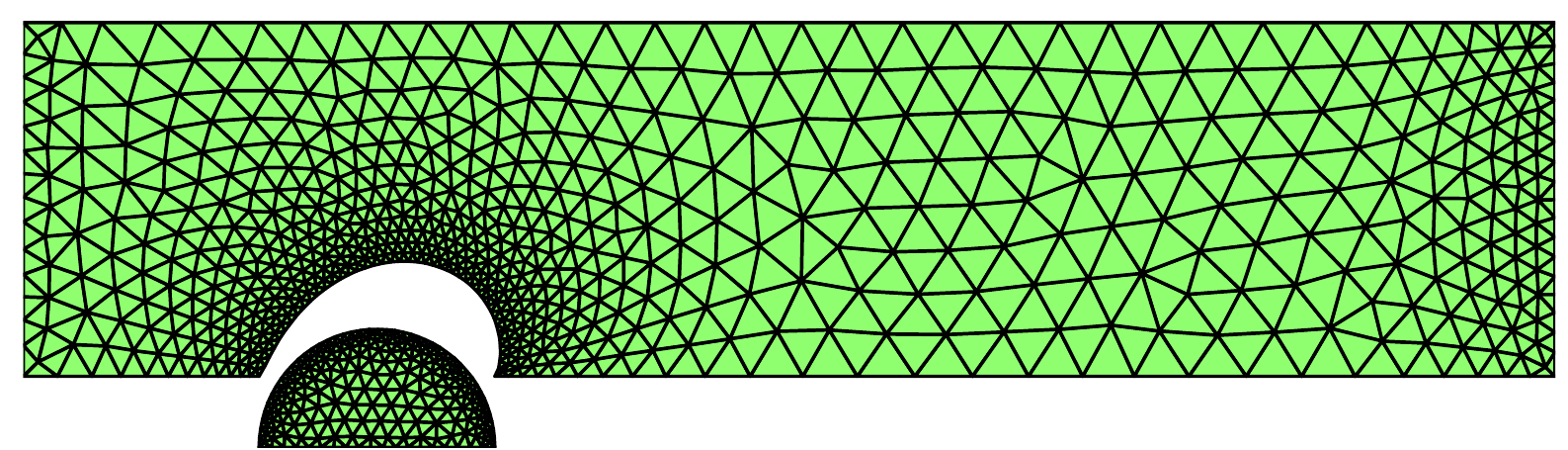}
}
\caption{Computational meshes at $t=0$ and then $t=10$ for case (I).
$\PP_2/\PP_1/\PP_2$. $h^f_{\min}=0.0290$. $h^f_{\max}=0.2743$.
$h^s_{\min}=0.0202$. $h^s_{\max}=0.0889$. ($h$ is the size of the edge. $f$ means fluid and $s$ means solid.)
1239 triangles for the fluid mesh and 370 triangles for the solid mesh.
$\Dt=1$.
}
\label{F.1}
\end{figure}

\begin{figure}[h]
\begin{minipage}{2.2in}
\centerline{
\includegraphics[width=2.2in]{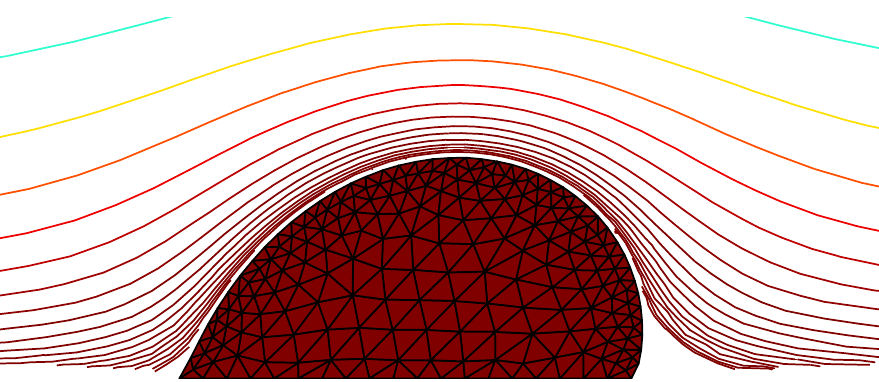}}
\bigskip
\end{minipage}
\begin{minipage}{3in}
\centerline{
\includegraphics[width=1.5in]{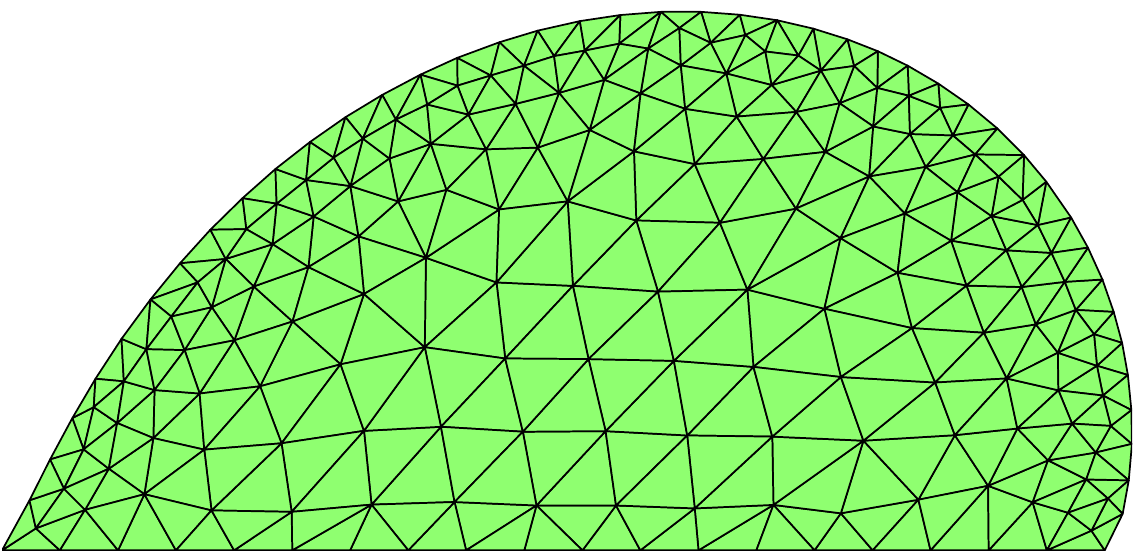}
\includegraphics[width=1.5in]{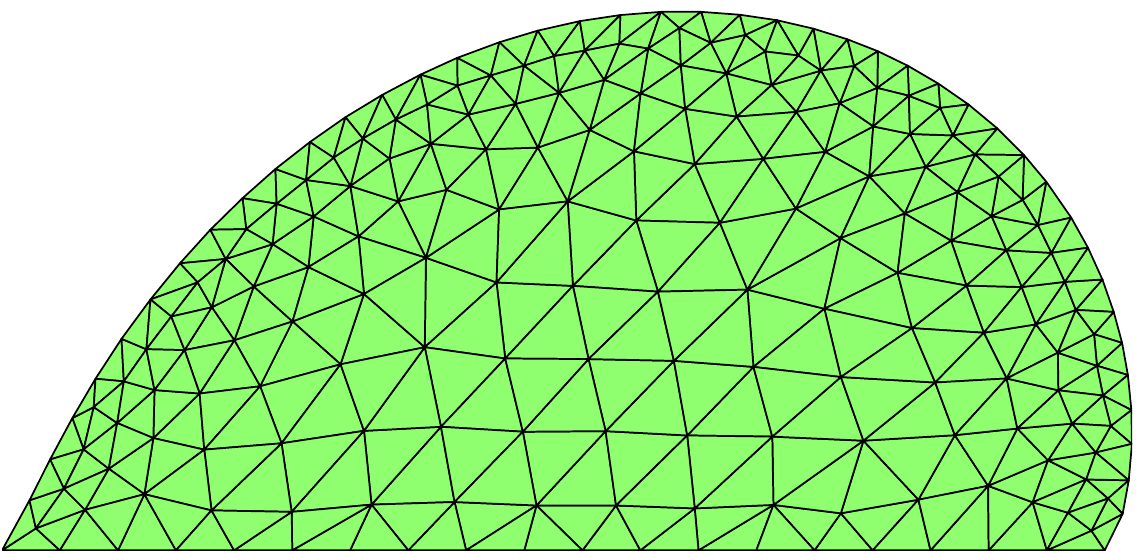}}
\end{minipage}
\caption{Left: Streamline around the deformed semi-cylinder at
$t=10$ for case (I). Middle: Structure position at $t=10$ using
domain decomposition approach with $\Dt=2\times 10^{-4}$. Right:
Structure position at $t=10$ using combined field formulation with
$\Dt=1$.} \label{F.3}
\end{figure}

\begin{figure}[hbtp]
\centerline{
\includegraphics[width=4.8in]{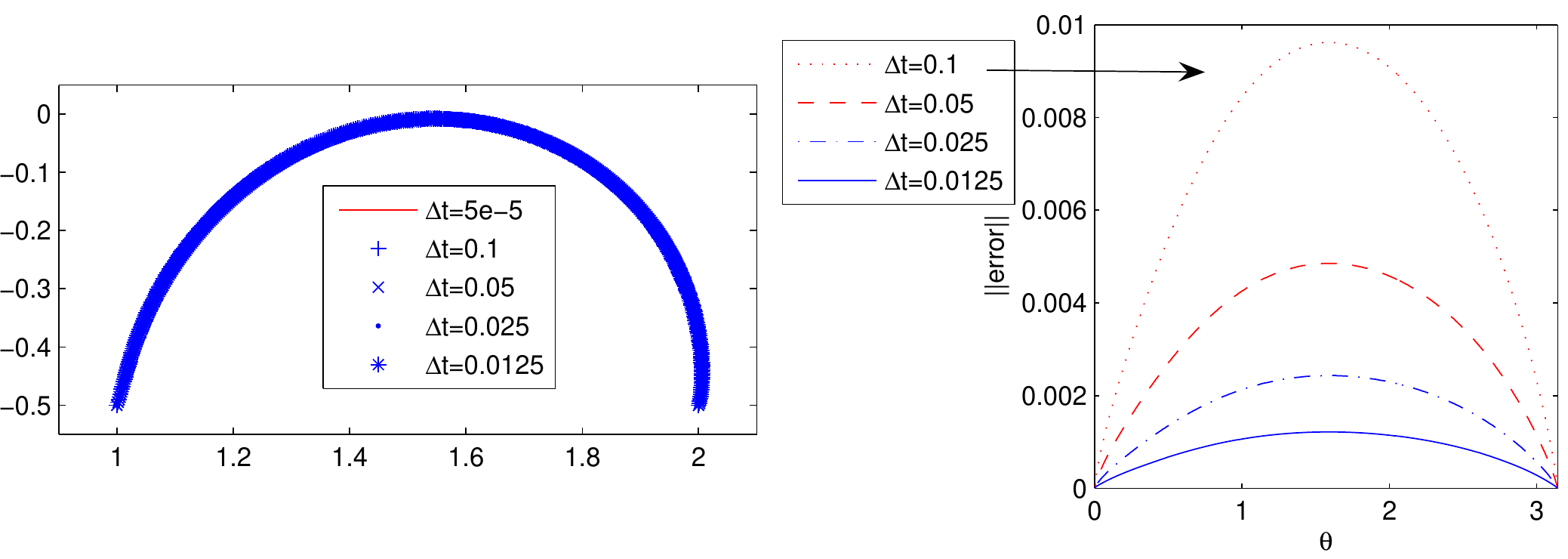}
}
\caption{Interface positions and interface errors at $t=1$ for case (I).
Left: The interface positions with different $\Dt$ (they coincide on the plot).
Right: The error on the interface, $\|\vphi(\zz,1)-\vphi_h(\zz,1)\|_{\ell^2}$ v.s. $\theta$.
Here $\theta=\theta(\zz)$ is angle between $\zz-(1.5,-0.5)$ and the positive $x$-axis.
Recall that $\zz$ is the material point on the interface $\Gam$ which is a semi-circle
for case (I) and $(1.5,-0.5)$ is the center of the semi-circle.
See Table~\ref{Table.accu} for the error on the whole $\Omg^s$.} \label{F.interface}
\end{figure}

\begin{figure}[hbtp]
\centerline{
\includegraphics[width=4.5in]{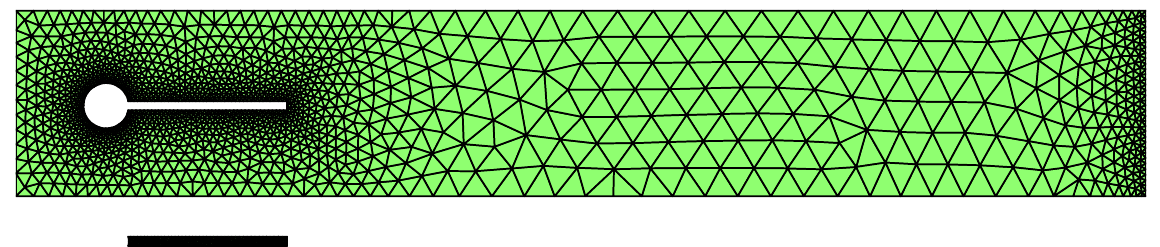}}
\centerline{
\includegraphics[width=4.5in]{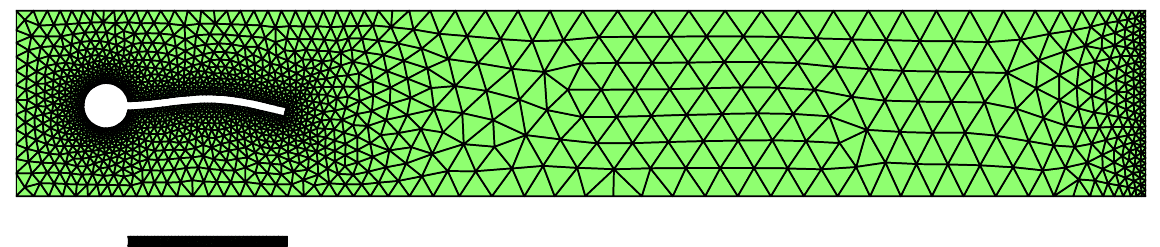}}
\caption{Computational meshes at $t=0$ and then $t=8$ for case (II). $\PP_3/\PP_2/\PP_3$. $h^f_{\min}=0.005$. $h^f_{\max}=0.088$.
$h^s_{\min}=0.0034$. $h^s_{\max}=0.01$. ($h$ is the size of the edge.)
2727 triangles for the fluid mesh and 274 triangles for the solid mesh.
$\Dt=0.0005$.}
\label{F.4}
\end{figure}

\begin{figure}[hbtp]
\centerline{
\includegraphics[width=4.8in]{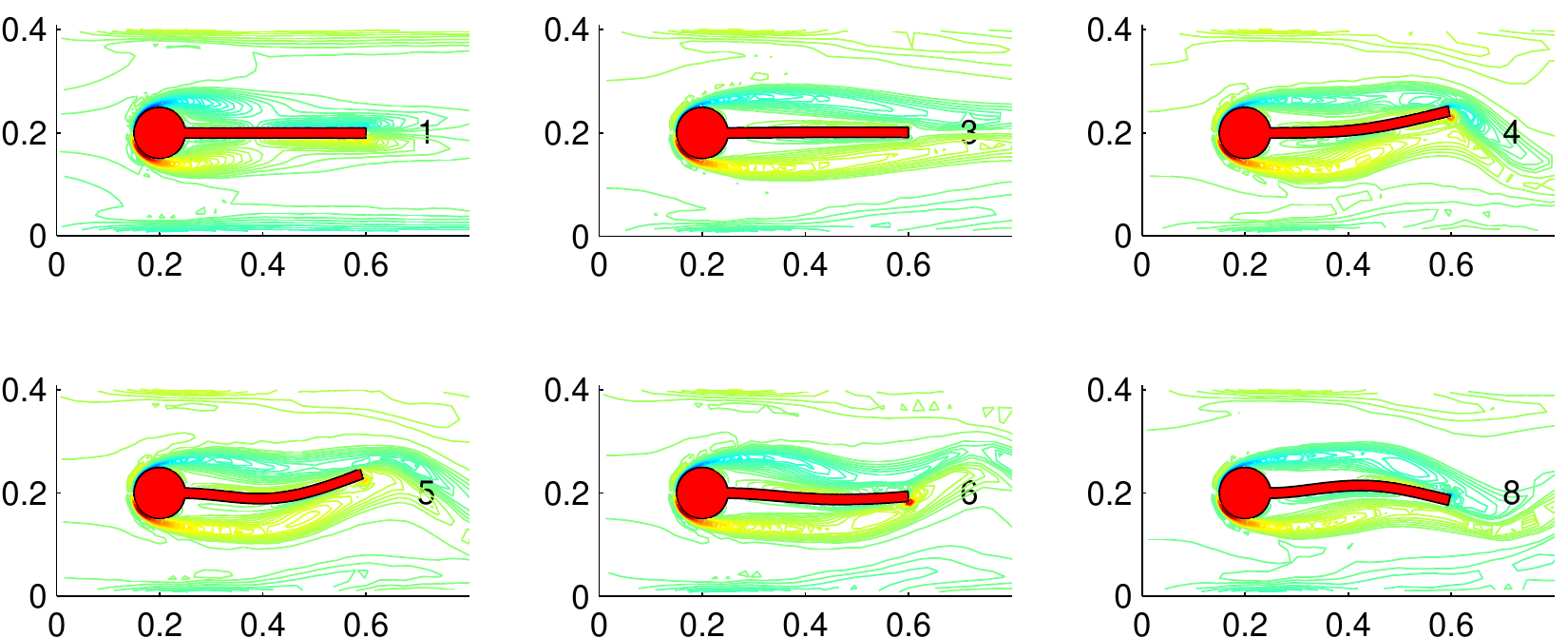}}
\caption{Case (II): Vorticity of the flow and position of the bar at different time. $t=[1,3,4,5,6,8]$. }
\label{F.5}
\end{figure}

\begin{figure}[hbtp]
\centerline{
\includegraphics[width=2.4in]{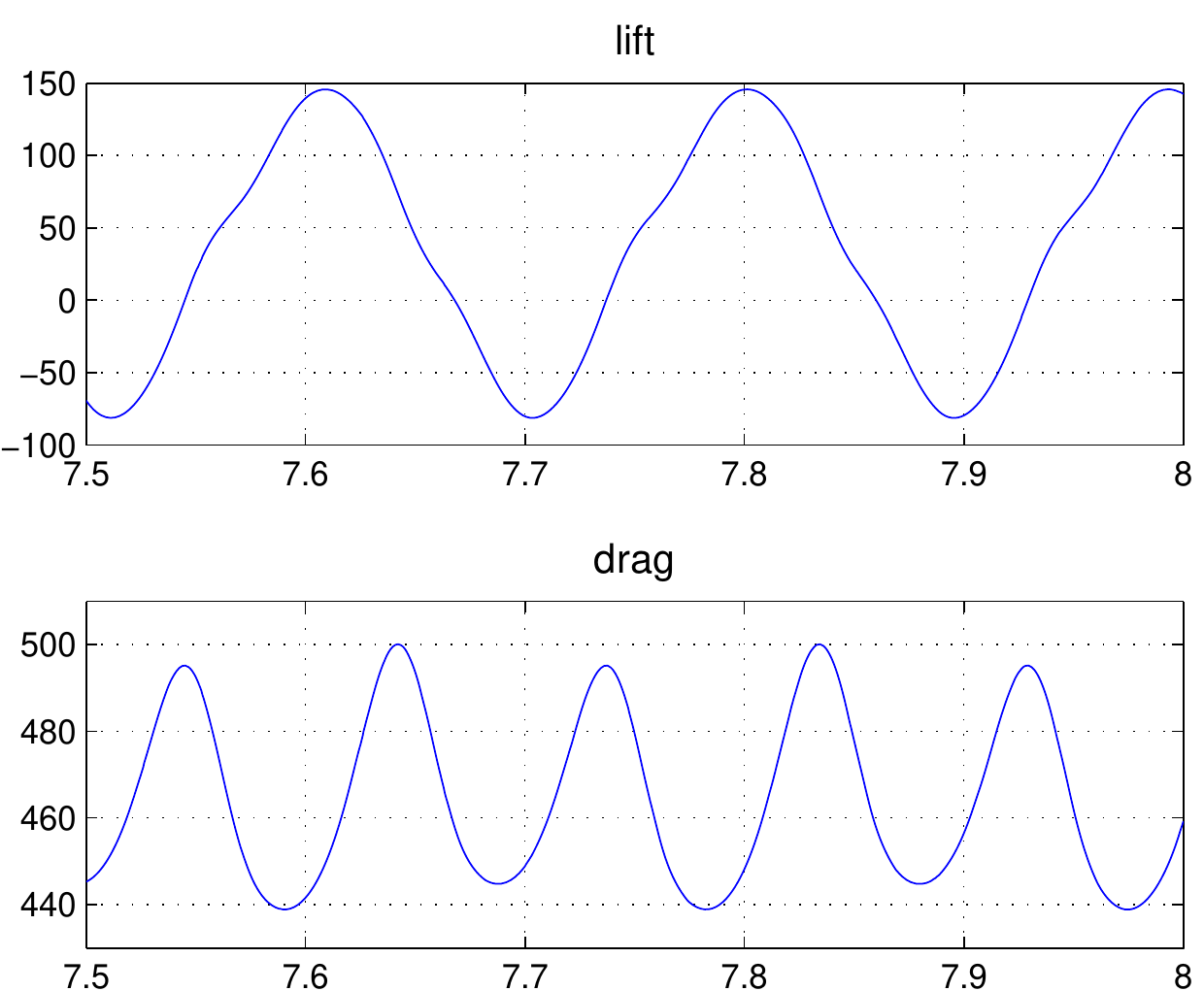}
\includegraphics[width=2.4in]{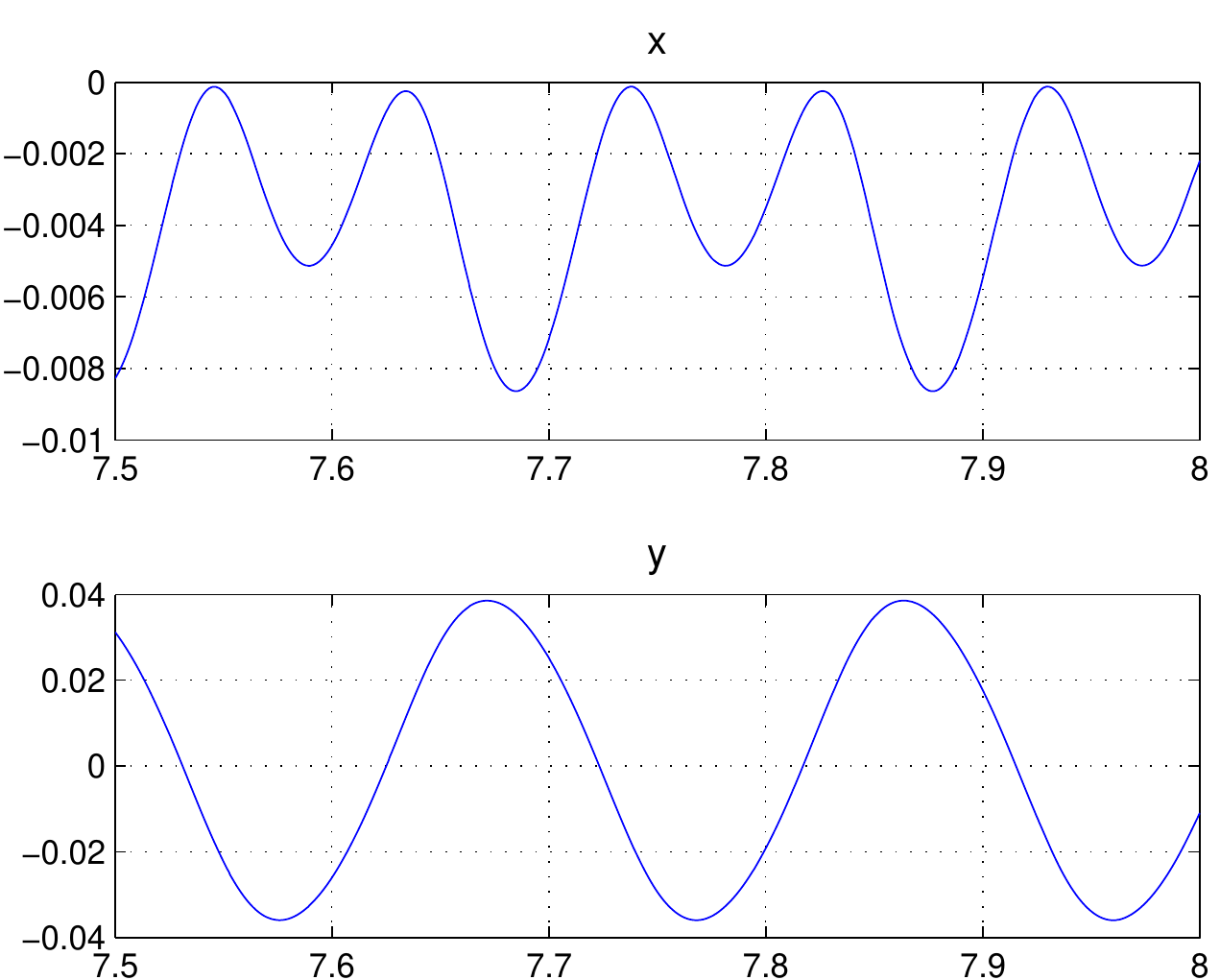}}
\caption{Case (II): Left plot: Lift and drag forces on the cylinder+bar.
Right plot: Displacement of the center point on the tail of the
bar.} \label{F.6}
\end{figure}

\section*{Acknowledgments}
The author's work is supported by the start-up
grant from the National University of Singapore with grant number
R-146-000-129-133. The author thanks Dr. Stuart Antman and Dr. Jian-Guo Liu
for the teaching of continuum mechanics.
The author thanks Dr. Long Chen for the finite element package iFEM
based on which the finite element calculations in this paper are done.
The author thanks Dr. Richard Kollar for helpful discussions.


\end{document}